\newcommand{\arrowIn}{
\tikz \draw[-stealth] (-1pt,0) -- (1pt,0);
}
\newcommand{\arrowOut}{
\tikz \draw[-stealth] (1pt,0) -- (-1pt,0);
}
\tikzset{empty/.style={black, fill=white}}
\definecolor{verylight}{gray}{0.97}
\definecolor{light}{gray}{0.9}
\definecolor{medium}{gray}{0.85}
\definecolor{dark}{gray}{0.6}
\def\NZQ{\mathbb}               
\def\FF{{\NZQ F}}
\def\G{{\mathcal G}}
\def\pd{\textup{pd}}
\def\opn#1#2{\def#1{\operatorname{#2}}} 
\opn\chara{char} \opn\length{\ell} \opn\pd{pd} \opn\rk{rk}
\opn\projdim{proj\,dim} \opn\injdim{inj\,dim} \opn\rank{rank}
\opn\depth{depth} \opn\grade{grade} \opn\height{height}
\opn\embdim{emb\,dim} \opn\codim{codim}
\opn\Tr{Tr} \opn\bigrank{big\,rank}
\opn\superheight{superheight}\opn\lcm{lcm}
\opn\trdeg{tr\,deg}
\opn\reg{reg} \opn\lreg{lreg} \opn\ini{in} \opn\lpd{lpd}
\opn\size{size} \opn\sdepth{sdepth}
\opn\link{link}\opn\fdepth{fdepth}\opn\lex{lex}
\opn\tr{tr}
\opn\type{type}
\opn\gap{gap}
\opn\diam{diam}
\opn\Mod{Mod}
\DeclareMathOperator{\im}{im}
\DeclareMathOperator{\wim}{wim}
\opn\div{div} \opn\Div{Div} \opn\cl{cl} \opn\Cl{Cl}
\opn\Spec{Spec} \opn\Supp{Supp} \opn\supp{supp} \opn\Sing{Sing}
\opn\Ass{Ass} \opn\Min{Min}\opn\Mon{Mon}
\opn\Ann{Ann} \opn\Rad{Rad} \opn\Soc{Soc}
\opn\Im{Im} \opn\Ker{Ker} \opn\Coker{Coker} \opn\Am{Am}
\opn\Hom{Hom} \opn\Tor{Tor} \opn\Ext{Ext} \opn\End{End}
\opn\Aut{Aut} \opn\id{id}
\opn\nat{nat}
\opn\pff{pf}
\opn\Pf{Pf} \opn\GL{GL} \opn\SL{SL} \opn\mod{mod} \opn\ord{ord}
\opn\Gin{Gin} \opn\Hilb{Hilb}\opn\sort{sort}
\opn\PF{PF}\opn\Ap{Ap}
\opn\dist{dist}
\opn\aff{aff}
\opn\relint{relint} \opn\st{st}
\opn\lk{lk} \opn\cn{cn} \opn\core{core} \opn\vol{vol}  \opn\inp{inp} \opn\nilpot{nilpot}
\opn\link{link} \opn\star{star}\opn\lex{lex}\opn\set{set}
\opn\width{wd}
\opn\Fr{F}
\opn\QF{QF}
\opn\G{G}
\opn\type{type}\opn\res{res}
\opn\conv{conv}
\opn\sr{sr}
\opn\gr{gr}
\def\pot#1#2{#1[\kern-0.28ex[#2]\kern-0.28ex]}
\opn\dirlim{\underrightarrow{\lim}}
\opn\inivlim{\underleftarrow{\lim}}
\let\to=\rightarrow
\def\Implies{\ifmmode\Longrightarrow \else
	\unskip${}\Longrightarrow{}$\ignorespaces\fi}
\def\implies{\ifmmode\Rightarrow \else
	\unskip${}\Rightarrow{}$\ignorespaces\fi}
\def\iff{\ifmmode\Longleftrightarrow \else
	\unskip${}\Longleftrightarrow{}$\ignorespaces\fi}
\newtheorem{Theorem}{Theorem}[section]
\newtheorem{Lemma}[Theorem]{Lemma}
\newtheorem{Corollary}[Theorem]{Corollary}
\newtheorem{Proposition}[Theorem]{Proposition}
\newtheorem{Remark}[Theorem]{Remark}
\newtheorem{Example}[Theorem]{Example}
\newtheorem{Definition}[Theorem]{Definition}
\newtheorem{Question}[Theorem]{Question}
\let\epsilon\varepsilon
\let\kappa=\varkappa
\def\qed{\ifhmode\textqed\fi
	\ifmmode\ifinner\hfill\quad\qedsymbol\else\dispqed\fi\fi}
\def\textqed{\unskip\nobreak\penalty50
	\hskip2em\hbox{}\nobreak\hfill\qedsymbol
	\parfillskip=0pt \finalhyphendemerits=0}
\def\dispqed{\rlap{\qquad\qedsymbol}}
\begin{document}

	\title{Matching powers of monomial ideals and edge ideals of weighted oriented graphs}
	\author{Nursel Erey, Antonino Ficarra}
	
	\address{Nursel Erey, Gebze Technical University, Department of Mathematics, 41400 Gebze, Kocaeli, Turkey}
	\email{nurselerey@gtu.edu.tr}
	
	\address{Antonino Ficarra, Department of mathematics and computer sciences, physics and earth sciences, University of Messina, Viale Ferdinando Stagno d'Alcontres 31, 98166 Messina, Italy}
	\email{antficarra@unime.it}
	
	
	\subjclass[2020]{Primary 13F20; Secondary 05E40}
	
	\keywords{Edge Ideals, Linear Resolutions, Matching Powers, Polymatroids, Weighted Graphs}
	
	\maketitle
	
	\begin{abstract}
      We introduce the concept of matching powers of monomial ideals. Let $I$ be a monomial ideal of $S=K[x_1,\dots,x_n]$, with $K$ a field. The $k$th matching power of $I$ is the monomial ideal $I^{[k]}$ generated by the products $u_1\cdots u_k$ where $u_1,\dots,u_k$ is a sequence of support disjoint monomials contained in $I$. This concept naturally generalizes that of squarefree powers of squarefree monomial ideals. We study normalized depth function of matching powers of monomial ideals. We provide bounds for the regularity and projective dimension of edge ideals of weighted oriented graphs. When $I$ is a non-quadratic edge ideal of a weighted oriented graph which has no even cycles, we characterize when $I^{[k]}$ has a linear resolution. 
	\end{abstract}
	
	\section*{Introduction}
        Let $S=K[x_1,\dots,x_n]$ be the polynomial ring over a field $K$. Recall that the edge ideal of a finite simple graph $G$ with vertices $x_1,\dots ,x_n$ is generated by all the monomials $x_ix_j$ such that $\{x_i,x_j\}$ is an edge of $G$. The study of minimal free resolutions of edge ideals and their powers produced a great deal of interaction between combinatorics and commutative algebra. One of the most natural problems in this regard is to understand when those ideals, or more generally monomial ideals, have linear resolutions.  Although edge ideals with linear resolutions are combinatorially characterized by a famous result of Fröberg~\cite{F}, it is unknown in general when powers of edge ideals have linear resolutions. Herzog, Hibi and Zheng~\cite{HHZ2} showed that if an edge ideal has a linear resolution, then so does every power of it. It is their result that served as a starting point for the close examination of linear resolutions of powers of edge ideals by many researchers, resulting in several interesting results and conjectures. 
 
 For any squarefree monomial ideal $I$ of $S$, the $k$th squarefree power of $I$, denoted by $I^{[k]}$ is the monomial ideal generated by all squarefree monomials in $I^k$. Recently, squarefree powers of edge ideals were studied in \cite{BHZN18, CFL, EH2021, EHHM2022a, EHHM2022b, FHH23, SASF2022, SASF2023}. Determining linearity of minimal free resolutions of squarefree powers or finding their invariants is as challenging as those of ordinary powers although squarefree and ordinary powers have quite different behavior. In the case that $I$ is considered as edge ideal of a hypergraph $\mathcal{H}$, the minimal monomial generators of $I^{[k]}$ correspond to matchings of $\mathcal{H}$ of size $k$, which makes combinatorial aspect of squarefree powers interesting as well.

 This paper aims at presenting a wider framework for the study of squarefree powers by introducing a more general concept which we call matching powers. If $I$ is a monomial ideal of $S$, then the \emph{$k$th matching power} $I^{[k]}$ of $I$ is generated by the products $u_1\cdots u_k$ where $u_1,\dots,u_k$ is a sequence of monomials in $I$ with pairwise disjoint support. Indeed, if $I$ is a squarefree monomial ideal, then the $k$th squarefree power of $I$ is the same as the $k$th matching power of $I$. With this new concept, since we are no longer restricted to squarefree monomial ideals, we can consider not only edge ideals of simple graphs but also edge ideals of weighted oriented graphs. 
  
        We now discuss how the paper is organized. In Section~\ref{sec:1-EreyFic}, we summarize basic facts of the theory of matching powers. We define the normalized depth function $g_I$ of a monomial ideal $I$ in Definition~\ref{def:gI-MononomialCase}. This function generalizes the normalized depth function introduced in \cite{EHHM2022b} for squarefree monomial ideals. In Theorem~\ref{Thm:I^[nu(I)]Polymatroidal} we show that if $I$ is a quadratic monomial ideal, then the highest nonvanishing matching power of $I$ is polymatroidal. 

        In Section~\ref{sec:2-EreyFic}, we turn our attention to edge ideals of weighted oriented graphs. We make comparisons between homological invariants of matching powers $I(\mathcal{D})^{[k]}$ and $I(G)^{[k]}$, where $G$ is the underlying graph of a weighted oriented graph $\mathcal{D}$. We provide lower bounds for the regularity and projective dimension of $I(\mathcal{D})^{[k]}$ in Propositions \ref{prop:regularity lower bound} and \ref{prop:projective dimension lower bound}.
        
        In Section~\ref{sec:3-EreyFic}, we study linearly related matching powers. The main result of the section is Theorem~\ref{thm: linearly related only in matching power} which characterizes when $I(\mathcal{D})^{[k]}$ has a linear resolution or is linearly related provided that the underlying graph $G$ of $\mathcal{D}$ has no even cycles and $I(\mathcal{D})\neq I(G)$. In particular, this result combined with \cite[Theorem~41]{EH2021} gives a complete classification of weighted oriented forests $\mathcal{D}$ such that $I(\mathcal{D})^{[k]}$ has a linear resolution.

	\section{Matching Powers}\label{sec:1-EreyFic}
	
	Let $S=K[x_1,\dots,x_n]$ be the standard graded polynomial ring with coefficients in a field $K$. Let $I\subset S$ be a monomial ideal. We denote by $G(I)$ the minimal monomial generating set of $I$.  If $u$ is a monomial, we call $\supp(u)=\{i:x_i\ \textup{divides}\ u\}$ the \textit{support} of $u$. 
 The \textit{$k$th matching power} of $I$ is the monomial ideal defined as
 $$
	I^{[k]}\ =\ (f_1\cdots f_k\ :\ f_i\in G(I), \, \supp(f_i)\cap\supp(f_j)=\emptyset \text{ for all } 1\le i<j\le k ).
	$$
 
 Recall that $f_1,\dots,f_m$ is a \textit{regular sequence} (on $S$) if $f_i$ is a non zero--divisor on $S/(f_1,\dots,f_{i-1})$ for $i=1,\dots,m$. Therefore one can write 
$$
	I^{[k]}\ =\ (f_1\cdots f_k\ :\ f_i\in G(I), f_1,\dots,f_k\ \textit{is a regular sequence}).
	$$
We denote by $\nu(I)$ the \textit{monomial grade} of $I$, that is, the maximum size of a set of monomials in $I$ which are pairwise support disjoint. Note that $I^{[k]}\ne0$ if and only if $1\le k\le\nu(I)$.
	
	We define the \textit{support} of $I$ by $\supp(I)=\bigcup_{u\in G(I)}\supp(u)$. We say that $I$ is \textit{fully supported} if $\supp(I)=\{1,2,\dots,n\}$. From now on, we tacitly assume that all monomial ideals we consider are fully supported.
	
	\begin{Example}
		\rm \begin{enumerate}[label=(\roman*)]
			\item Let $I$ be a squarefree monomial ideal. Then, a product $u_1\cdots u_k$ with $u_i\in G(I)$ is in $I^{[k]}$ if and only if $u_1\cdots u_k$ is squarefree. Thus, in this case, $I^{[k]}$ is the usual \textit{$k$th squarefree power} of $I$ introduced in \cite{BHZN18}.
			\item Let $I$ be a complete intersection monomial ideal generated by $u_1,\dots,u_m$. Then $I^{[k]}=(u_{i_1}\cdots u_{i_k}:1\le i_1<\dots<i_k\le m)$ and $\nu(I)=m$.
			\item Let $(x_1^2,\,x_2^2,\,x_3^2,\,x_3x_4,\,x_5^5)$. Then $\nu(I)=4$ and
   \begin{align*}
       \phantom{aaaaa}I^{[2]}\ &=\ (x_1^2x_2^2,\,x_1^2x_3^2,\,x_1^2x_3x_4,\,x_1^2x_5^5,\,x_2^2x_3^2,\,x_2^2x_3x_4,\,x_2^2x_5^5,\,x_3^2x_5^5,\,x_3x_4x_5^5)\\
       I^{[3]}\ &=\ (x_1^2x_2^2x_3^2,\,x_1^2x_2^2x_3x_4,\,x_1^2x_2^2x_5^5,\,x_1^2x_3^2x_5^5\,,x_1^2x_3x_4x_5^5,\,x_2^2x_3^2x_5^5,\,x_2^2x_3x_4x_5^5),\\
       I^{[4]}\ &=\ (x_1^2x_2^2x_3^2x_5^5,\,x_1^2x_2^2x_3x_4x_5^5).
   \end{align*}
		\end{enumerate}
	\end{Example}\bigskip
 
\subsection*{Normalized depth function}

	For a monomial $u\in S$, $u\ne1$, the \textit{$x_i$-degree} of $u$ is defined as the integer
	$$
	\deg_{x_i}(u)\ =\ \max\{j\ge0:x_i^j\ \textup{divides}\ u\}.
	$$
	
	Let $I\subset S$ be a monomial ideal. The \textit{initial degree} of $I$, denoted by $\textup{indeg}(I)$ is the smallest degree of a monomial belonging to $I$. Following \cite{F2}, we define the \textit{bounding multidegree} of $I$ to be the vector
	$$
	{\bf deg}(I)\ =\ (\deg_{x_1}(I),\dots,\deg_{x_n}(I)),
	$$
	with
	$$
	\deg_{x_i}(I)\ =\ \max_{u\in G(I)}\deg_{x_i}(u),\ \ \textup{for all}\ \ \ 1\le i\le n.
	$$
 
	We provide a lower bound for the depth of $S/I^{[k]}$ in terms of the initial degree of $I^{[k]}$ and the bounding multidegree of $I$ as follows:
 
	\begin{Theorem}\label{Thm:ineqDepthIMon}
		Let $I\subset S$ be a monomial ideal. Then, for all $1\le k\le\nu(I)$, we have
		$$
		\depth(S/I^{[k]})\ \ge\ \textup{indeg}(I^{[k]})-1+(n-|{\bf deg}(I)|).
		$$
	\end{Theorem}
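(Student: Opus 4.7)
The plan is to reduce the bound to the squarefree case via polarization. Setting $d_i := \deg_{x_i}(I)$, let $T := K[x_{ij} : 1 \le i \le n,\ 1 \le j \le d_i]$ be the polynomial ring in $N := |{\bf deg}(I)|$ variables. For any monomial $u \in S$ satisfying $\deg_{x_i}(u) \le d_i$ for all $i$, denote by $\widetilde u := \prod_{i=1}^n \prod_{j=1}^{\deg_{x_i}(u)} x_{ij}$ its polarization, and write $\widetilde I \subset T$ for the standard polarization of $I$.

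The central combinatorial step is to establish the identity
$$
(\widetilde I)^{[k]} \;=\; \bigl(\widetilde u \;:\; u \in G(I^{[k]})\bigr)
$$
in $T$, i.e.\ that polarization commutes with matching-power formation. This reduces to the equivalence: $u_1,\dots,u_k \in G(I)$ have pairwise disjoint supports in $S$ if and only if $\widetilde{u_1},\dots,\widetilde{u_k}$ have pairwise disjoint supports in $T$. The forward direction is immediate; conversely, if $x_i \in \supp(u_l) \cap \supp(u_m)$ with $l \ne m$, then $x_{i,1}$ lies in both $\supp(\widetilde{u_l})$ and $\supp(\widetilde{u_m})$. Combined with the multiplicativity of polarization on support-disjoint factors, $\widetilde{u_1\cdots u_k} = \widetilde{u_1}\cdots\widetilde{u_k}$, this yields both containments.

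Once this identity is in hand, standard polarization theory (the linear forms $x_{ij} - x_{i,1}$ with $j \ge 2$ form a regular sequence on $T/(\widetilde I)^{[k]}$ whose quotient is $S/I^{[k]}$) gives
$$
\depth_T\bigl(T/(\widetilde I)^{[k]}\bigr) \;=\; \depth_S(S/I^{[k]}) + (N - n).
$$
Since $(\widetilde I)^{[k]}$ is squarefree and $\textup{indeg}((\widetilde I)^{[k]}) = \textup{indeg}(I^{[k]})$, we may invoke the classical bound $\depth(T/J) \ge \textup{indeg}(J) - 1$ for a squarefree monomial ideal $J$; this follows directly from Hochster's formula, since the Stanley--Reisner complex of $J$ contains the full $(\textup{indeg}(J)-2)$-skeleton of the ambient simplex, forcing $\pd(T/J) \le N - \textup{indeg}(J) + 1$. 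Assembling these inequalities produces
$$
\depth_S(S/I^{[k]}) \;\ge\; \textup{indeg}(I^{[k]}) - 1 + \bigl(n - |{\bf deg}(I)|\bigr),
$$
as claimed. The principal technical point is the commutation identity above; the rest is a standard application of polarization together with the squarefree initial-degree bound.
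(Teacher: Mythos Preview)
Your proof is correct and takes a genuinely different route from the paper's. The paper argues directly via the Taylor resolution: for any monomial ideal $J$ one has $\pd(J)\le|{\bf deg}(J)|-\textup{indeg}(J)$, since the maximal shift appearing in the resolution is bounded by $|{\bf deg}(J)|$ while the minimal shift in homological degree $p$ is at least $\textup{indeg}(J)+p$; it then checks the elementary inequality $\deg_{x_\ell}(I^{[k]})\le\deg_{x_\ell}(I)$ and concludes via Auslander--Buchsbaum. You instead reduce to the squarefree case by polarization, establishing the commutation $(I^\wp)^{[k]}=(I^{[k]})^\wp$ and then invoking the classical bound $\depth(T/J)\ge\textup{indeg}(J)-1$ for squarefree $J$. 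In effect you are anticipating what the paper later records as Proposition~\ref{polarization proposition}(b) and~(d) and using it to deduce the theorem; this is more conceptual but front-loads those facts, whereas the paper's Taylor argument is self-contained and in fact yields the squarefree bound you quote as a special case. One small point worth making explicit: your definition of $\widetilde u$ for $u\in G(I^{[k]})$ tacitly uses $\deg_{x_i}(I^{[k]})\le d_i$, which is immediate from support-disjointness but is precisely the content of the paper's Step~2.
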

	\begin{proof}
		We divide the proof in three steps.\medskip\\
		{\bf (Step 1).} Let $J\subset S$ be a monomial ideal. We claim that
		$$
		\pd(J)\le|{\bf deg}(J)|-\textup{indeg}(J).
		$$
		To prove the assertion, we use the Taylor resolution. Let $\beta_{i,j}(J)$ be a non--zero graded Betti number with $i=\pd(J)$. Then $j\ge\textup{indeg}(J)+\pd(J)$. It follows from the Taylor resolution that the highest shift in the minimal resolution of $J$ is at most $|{\bf deg}(J)|$, see \cite[Theorem 1.3]{F2}. Thus, $|{\bf deg}(J)|\ge j$. Altogether, we obtain $|{\bf deg}(J)|\ge j\ge\textup{indeg}(J)+\pd(J)$ and the assertion follows.\medskip\\
		{\bf(Step 2).} We claim that $|{\bf deg}(I^{[k]})|\le|{\bf deg}(I)|$ for all $1\le k\le\nu(I)$. Indeed, we even show that $\deg_{x_\ell}(I^{[k]})\le\deg_{x_\ell}(I)$ for all $\ell$. A set of generators of $I^{[k]}$ is $$\Omega\ =\ \{u_1\cdots u_k\ :\ u_i\in G(I),\supp(u_i)\cap\supp(u_j)=\emptyset,1\le i<j\le k\}.$$ Thus, $G(I^{[k]})$ is a subset of $\Omega$. Hence, if $v\in G(I^{[k]})$, then $v=u_1\cdots u_k\in\Omega$. Let $x_\ell$ be a variable dividing $v$, then $x_\ell$ divides at most one monomial $u_i$, say $u_{i_{\ell}}$. Therefore, $\deg_{x_\ell}(v)\le\deg_{x_\ell}(u_{i_\ell})\le\deg_{x_\ell}(I)$ and the assertion follows.\medskip\\
		{\bf(Step 3).} By Steps 1 and 2 we have
		$$
			\pd(S/I^{[k]})\ \le\ |{\bf deg}(I^{[k]})|-\textup{indeg}(I^{[k]})+1\ \le\ |{\bf deg}(I)|-\textup{indeg}(I^{[k]})+1.
		$$
		The asserted inequality follows from the Auslander--Buchsbaum formula.
	\end{proof}
	
	As a consequence of Theorem \ref{Thm:ineqDepthIMon}, we can give the next definition:
	\begin{Definition}\label{def:gI-MononomialCase}
		\rm Let $I\subset S$ be a monomial ideal. For all $1\le k\le\nu(I)$, we set
		$$
		g_I(k)\ =\ \depth(S/I^{[k]})+|{\bf deg}(I)|-n-(\textup{indeg}(I^{[k]})-1),
		$$
		and call $g_I$ the \textit{normalized depth function} of $I$.
	\end{Definition}
	
	By Theorem \ref{Thm:ineqDepthIMon} the normalized depth function of $I$ is a nonnegative function. If $I\subset S$ is a squarefree monomial ideal, then ${\bf deg}(I)={\bf 1}=(1,\dots,1)$ and so
	$$
	g_{I}(k)=\depth(S/I^{[k]})-(\textup{indeg}(I^{[k]})-1)
	$$
	is the normalized depth function of $I$ introduced in \cite{EHHM2022b}. In the same paper, it was conjectured that $g_{I}$ is a nonincreasing function for any squarefree monomial ideal $I$. Later, Seyed Fakhari \cite{SASF2023-2} disproved the conjecture by constructing a family of cubic squarefree monomial ideals where $g_I(2)-g_I(1)$ can be arbitrarily large. On the other hand, it is still unknown whether the conjecture holds for edge ideals.

 Next, we will see how the normalized depth function and matching powers of a monomial ideal are related to those of its polarization. Before we proceed to this, recall that the \textit{polarization} of a monomial $u=x_1^{b_1}\cdots x_n^{b_n}\in S$ is the monomial
    $$
    u^\wp=\prod_{i=1}^n(\prod_{j=1}^{b_i}x_{i,j})=\prod_{\substack{1\le i\le n\\ b_i>0}}x_{i,1}x_{i,2}\cdots x_{i,b_i}
    $$
in the polynomial ring $K[x_{i,j}:1\le i\le n,1\le j\le b_i]$.  The \textit{polarization} of a monomial ideal $I$ is the squarefree monomial ideal $I^\wp$ of $S^\wp$ where \[S^\wp=K[x_{i,j}:1\le i\le n,1\le j\le\deg_{x_i}(I)].\] Therefore, its minimal generating set is given by $G(I^\wp)=\{u^\wp:u\in G(I)\}$.

	\begin{Proposition}\label{polarization proposition}
		Let $I\subset S$ be a monomial ideal. Then, the following hold.
		\begin{enumerate}[label=\textup{(\alph*)}]
                \item $\nu(I)=\nu(I^\wp)$.
                \item $(I^{[k]})^\wp=(I^\wp)^{[k]}$ for all $1\le k\le\nu(I)$.
			\item $g_I=g_{I^\wp}$.
			\item $\depth(S/I^{[k]})=\depth(S^\wp/(I^{\wp})^{[k]})-|{\bf deg}(I)|+n$, for all $1\le k\le\nu(I)$.
		\end{enumerate}
	\end{Proposition}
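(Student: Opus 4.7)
The plan is to establish (a) and (b) by a direct bijection at the level of generators, then to derive (d) from (b) via the standard fact that polarization preserves depth up to the number of newly adjoined variables, and finally to obtain (c) by substitution into the definitions of $g_I$ and $g_{I^\wp}$.

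For (a) and (b), the key facts about polarization are: (i) $u\mapsto u^\wp$ is a bijection between $G(I)$ and $G(I^\wp)$; (ii) for monomials $u,v\in S$ we have $\supp(u)\cap\supp(v)=\emptyset$ if and only if $\supp(u^\wp)\cap\supp(v^\wp)=\emptyset$, since the variables of $u^\wp$ project onto $\supp(u)$ under $x_{i,j}\mapsto i$; (iii) when $u_1,\dots,u_k$ have pairwise disjoint supports one has $(u_1\cdots u_k)^\wp=u_1^\wp\cdots u_k^\wp$, because each variable of $u_1\cdots u_k$ is contributed by a single $u_i$. Items (i) and (ii) immediately yield (a). For (b), item (iii) shows that every generator $(u_1\cdots u_k)^\wp$ of $(I^{[k]})^\wp$ lies in $(I^\wp)^{[k]}$, while conversely every generator $u_1^\wp\cdots u_k^\wp$ of $(I^\wp)^{[k]}$ equals the polarization of $u_1\cdots u_k\in I^{[k]}$.

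For (d), Step~2 in the proof of Theorem~\ref{Thm:ineqDepthIMon} gives $\deg_{x_\ell}(I^{[k]})\le\deg_{x_\ell}(I)$ for every $\ell$, so the natural polarization ring $T_k=K[x_{i,j}:1\le j\le\deg_{x_i}(I^{[k]})]$ of $I^{[k]}$ embeds as a subring of $S^\wp$, and by (b) the extension of $(I^{[k]})^\wp$ to $S^\wp$ agrees with $(I^\wp)^{[k]}$. The standard polarization-depth identity applied inside $T_k$ gives $\depth(T_k/(I^{[k]})^\wp)=\depth(S/I^{[k]})+|{\bf deg}(I^{[k]})|-n$, and passing from $T_k$ to $S^\wp$ tensors the quotient with a polynomial ring in $|{\bf deg}(I)|-|{\bf deg}(I^{[k]})|$ free variables, increasing the depth by that same amount. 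Summing the two contributions yields exactly (d). For (c), $I^\wp$ is squarefree and fully supported in $S^\wp$, so ${\bf deg}(I^\wp)$ is the all-ones vector of length $\dim S^\wp=|{\bf deg}(I)|$; combined with (b) and the fact that polarization preserves total degree this gives $\textup{indeg}((I^\wp)^{[k]})=\textup{indeg}(I^{[k]})$, and substituting these identities together with (d) into the definition of $g_{I^\wp}(k)$ recovers $g_I(k)$.

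The only genuinely subtle point is the distinction between the two natural ambient rings when polarizing $I^{[k]}$—the minimal one $T_k$ determined by ${\bf deg}(I^{[k]})$ versus $S^\wp$ determined by ${\bf deg}(I)$—but Step~2 of Theorem~\ref{Thm:ineqDepthIMon} makes the passage between them a harmless polynomial extension, so the bookkeeping is routine.
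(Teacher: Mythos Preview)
Your proposal is correct and follows essentially the same route as the paper: (a) and (b) via the support-disjointness criterion and multiplicativity of polarization on disjoint-support products, (d) via the standard ``polarization preserves homological invariants'' fact together with (b), and (c) by direct substitution using $|{\bf deg}(I^\wp)|=\dim S^\wp$ and preservation of initial degree. The only cosmetic difference is that the paper argues (d) through projective dimension (citing \cite[Corollary~1.6.3(d)]{HHBook2011}) and then applies Auslander--Buchsbaum once, whereas you track depth directly and are more explicit about the intermediate ring $T_k$ versus $S^\wp$; this extra care is justified but not a different idea.
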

    \begin{proof}
    For any monomials $u, v\in S$, we have $\supp(u)\cap \supp(v)=\emptyset$ if and only if $\supp(u^\wp)\cap \supp(v^\wp)=\emptyset$. Therefore, $\nu(I)=\nu(I^\wp)$. Moreover, the equality $u_1^\wp\cdots u_k^\wp=(u_1\cdots u_k)^\wp$ holds whenever the monomials $u_1,\dots,u_k$ are in pairwise disjoint sets of variables, and thus (b) follows.
        
        By \cite[Corollary 1.6.3(d)]{HHBook2011} and equation in (b), it follows that
        $$
        \pd(S/I^{[k]})\ =\ \pd(S^\wp/(I^{[k]})^\wp)\ =\ \pd(S^\wp/(I^\wp)^{[k]}).
        $$
        Taking into account that $S^\wp$ is a polynomial ring in $|{\bf deg}(I)|$ variables, applying the Auslander--Buchsbaum formula we get
        $$
        \depth(S/I^{[k]})+|{\bf deg}(I)|-n=\depth(S^\wp/(I^\wp)^{[k]})
        $$
        which proves (d). Since $\textup{indeg}(I^{[k]})=\textup{indeg}((I^\wp)^{[k]})$, subtracting $\textup{indeg}(I^{[k]})-1$ from both sides of the above equation, we obtain
        \[g_{I}(k)\ =\ g_{I^\wp}(k), \ \ \textup{for all}\ \ 1\le k\le\nu(I).\]
    \end{proof}
    
In \cite[Corollary~3.5]{EHHM2022b} it was proved that $g_{I(G)}(\nu(G))=0$ for any fully supported edge ideal $I(G)$. By part (c) of the above proposition, we extend this to all quadratic monomial ideals.
 
	\begin{Corollary}\label{corollary:normalized depth zero}
		Let $I\subset S$ be a monomial ideal generated in degree two. Then $g_I(\nu(I))=0$.
	\end{Corollary}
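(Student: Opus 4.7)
The plan is to reduce the statement to the known squarefree (edge ideal) case via polarization, using Proposition~\ref{polarization proposition}.

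First I would observe that if $I$ is generated in degree two, then every minimal generator of $I$ is of the form $x_ix_j$ (with $i\neq j$) or $x_i^2$. Polarizing we get $(x_ix_j)^\wp=x_{i,1}x_{j,1}$ and $(x_i^2)^\wp=x_{i,1}x_{i,2}$, both squarefree and of degree two. Hence $I^\wp$ is a squarefree monomial ideal generated in degree two, so $I^\wp=I(G)$ for some finite simple graph $G$ on the vertex set $\{x_{i,j}: 1\le i\le n,\ 1\le j\le \deg_{x_i}(I)\}$. Moreover, since $I$ is fully supported, for each $i$ there is some $u\in G(I)$ with $\deg_{x_i}(u)=\deg_{x_i}(I)$, and then $u^\wp$ is divisible by each of $x_{i,1},\dots,x_{i,\deg_{x_i}(I)}$; so $I^\wp$ is fully supported in $S^\wp$, i.e. $G$ is a fully supported graph.

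Next I would invoke Proposition~\ref{polarization proposition}: part~(a) gives $\nu(I)=\nu(I^\wp)=\nu(G)$, and part~(c) gives $g_I(k)=g_{I^\wp}(k)$ for every admissible $k$. In particular,
\[
g_I(\nu(I))\ =\ g_{I^\wp}(\nu(I^\wp))\ =\ g_{I(G)}(\nu(G)).
\]

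Finally, I would apply \cite[Corollary~3.5]{EHHM2022b}, which asserts that $g_{I(G)}(\nu(G))=0$ for any fully supported edge ideal of a finite simple graph. Combined with the previous display, this yields $g_I(\nu(I))=0$, as desired. The argument is essentially a bookkeeping reduction, so there is no serious obstacle beyond verifying that the polarization of a fully supported quadratic monomial ideal is the edge ideal of a fully supported simple graph; both verifications are immediate from the definitions.
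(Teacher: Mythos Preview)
Your proof is correct and follows exactly the paper's approach: reduce to the squarefree case via Proposition~\ref{polarization proposition}(a),(c) and then invoke \cite[Corollary~3.5]{EHHM2022b}. The only difference is that you spell out the (routine) verification that $I^\wp$ is a fully supported edge ideal, which the paper leaves implicit.
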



\subsection*{Highest nonvanishing matching power of a quadratic monomial ideal}
Let $G$ be a finite simple graph with vertex set $V(G)=[n]=\{1,2,\dots,n\}$ and edge set $E(G)$. The \textit{edge ideal} of $G$ is the ideal $I(G)=(x_ix_j:\{i,j\}\in E(G))$ of $S=K[x_1,\dots,x_n]$. A \textit{matching} of $G$ is a set of edges of $G$ which are pairwise disjoint. If $M$ is a matching, then we denote by $V(M)$ the set of vertices $\bigcup_{e\in M}e$. We denote by $\nu(G)$ the \textit{matching number} of $G$ which is the maximum size of a matching of $G$. Then one can verify that $\nu(I(G))=\nu(G)$. Moreover, the generators of $I(G)^{[k]}$ correspond to matchings of $G$ of size $k$. This justifies the choice to name $I^{[k]}$ the $k$th matching power of $I$.

 A monomial ideal $I\subset S$ generated in a single degree is called \textit{polymatroidal} if its minimal generators correspond to the bases of a polymatroid. That is, the  \textit{exchange property} holds: for all $u,v\in G(I)$ and all $i$ with $\deg_{x_i}(u)>\deg_{x_i}(v)$ there exists $j$ such that $\deg_{x_j}(u)<\deg_{x_j}(v)$ and $x_j(u/x_i)\in G(I)$. A squarefree polymatroidal ideal is called \textit{matroidal}.
 
    A polymatroidal ideal has linear quotients with respect to the lexicographic order induced by any ordering of the variables. Indeed, a polymatroidal ideal is weakly polymatroidal and the above claim follows from \cite[Proof of Theorem~12.7.2]{HHBook2011}. Moreover, since polymatroidal ideals have linear quotients, they have linear resolutions as well \cite[Proposition~8.2.1]{HHBook2011}.

    Due to a well-known result of Edmonds and Fulkerson (see Theorem 1 on page 246, \cite{W}), the ideal $I(G)^{[\nu(G)]}$ is matroidal for any graph $G$. In particular, this implies that $I(G)^{[\nu(G)]}$ has linear quotients, which was independently proved by  Bigdeli et al. in \cite[Theorem 4.1]{BHZN18}. We will extend this to the matching power of any quadratic monomial ideal.

   If $I$ is a polymatroidal ideal, then $I^\wp$ is not necessarily polymatroidal. For instance, the ideal $I=(x_1^2, x_1x_2, x_2^2)$ is polymatroidal but $I^\wp$ is not. On the other hand, we have  

    \begin{Lemma}\label{Lemma:IwpPolym=>IPolym}
     Let $I\subset S$ be a monomial ideal. If $I^\wp$ is polymatroidal, then so is $I$.
\end{Lemma}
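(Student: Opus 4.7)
The plan is to transfer the polymatroidal exchange property from $I^\wp$ back to $I$. Since polarization preserves total degree ($\deg u=\deg u^\wp$ for every monomial $u$), the ideal $I$ is automatically generated in a single degree whenever $I^\wp$ is, so the only thing to verify is the exchange condition.

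Take $u,v\in G(I)$ and an index $i$ with $a:=\deg_{x_i}(u)>\deg_{x_i}(v)$. I would apply the exchange property of $I^\wp$ to the pair $(u^\wp,v^\wp)$ at the variable $x_{i,a}$, which divides $u^\wp$ but not $v^\wp$. This produces a variable $x_{j,k}$ satisfying $x_{j,k}\mid v^\wp$, $x_{j,k}\nmid u^\wp$, and
$$
w\ :=\ x_{j,k}(u^\wp/x_{i,a})\ \in\ G(I^\wp).
$$
The divisibility conditions on $x_{j,k}$ translate into $\deg_{x_j}(u)<k\le\deg_{x_j}(v)$, which already supplies the inequality $\deg_{x_j}(u)<\deg_{x_j}(v)$ required by the exchange property for $I$.

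The main step is then to argue that $x_j(u/x_i)\in G(I)$. Since $w\in G(I^\wp)$, there exists $u'\in G(I)$ with $w=(u')^\wp$. At this point I would invoke the shape of a polarization: for each variable $x_\ell$, the $x_{\ell,\cdot}$-factors of $(u')^\wp$ must form the contiguous initial block $x_{\ell,1},\ldots,x_{\ell,\deg_{x_\ell}(u')}$. The $x_{j,\cdot}$-factors of $w$, however, consist of the block $x_{j,1},\ldots,x_{j,\deg_{x_j}(u)}$ inherited from $u^\wp$ together with a single extra factor $x_{j,k}$ with $k>\deg_{x_j}(u)$, and the required contiguity forces $k=\deg_{x_j}(u)+1$. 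The $x_{i,\cdot}$-factors of $w$ equal $\{x_{i,1},\ldots,x_{i,a-1}\}$, which is already a contiguous initial block. Hence $u'=x_j(u/x_i)$, completing the exchange for $I$.

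I do not foresee any genuine obstacle. The only delicate point is this last observation: the fact that $w$ is a polarization rigidly pins down the index $k$ to be $\deg_{x_j}(u)+1$, and this rigidity is exactly what lets the exchange in $S^\wp$ depolarize into a bona fide exchange in $S$.
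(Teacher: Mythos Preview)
Your proof is correct and follows essentially the same route as the paper's: apply the exchange property of $I^\wp$ to $u^\wp,v^\wp$ at the variable $x_{i,a}$, obtain $x_{j,k}$, and then use that the resulting monomial lies in $G(I^\wp)$ (hence is an honest polarization) to force $k=\deg_{x_j}(u)+1$, from which $x_j(u/x_i)\in G(I)$. The paper records the step ``$\deg_{x_j}(u)=k-1$'' more tersely, while you spell out the contiguity-of-second-indices reason behind it; the only small point you leave implicit is that necessarily $j\ne i$, which is automatic since $\deg_{x_i}(u)>\deg_{x_i}(v)$ forces every $x_{i,\ell}$ dividing $v^\wp$ to divide $u^\wp$ as well.
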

\begin{proof}
    Let $u,v\in G(I)$ with $p=\deg_{x_i}(u) >\deg_{x_i}(v)$. Then $x_{i,p}$ divides $u^\wp$ but not $v^\wp$. In fact,
\[\deg_{x_{i,p}}(u^\wp)=1>0=\deg_{x_{i,p}}(v^\wp).\]
Since $I^\wp$ is polymatroidal, there exists $x_{j,k}$ with $j\neq i$ such that 
\[\deg_{x_{j,k}}(v^\wp)=1>0=\deg_{x_{j,k}}(u^\wp)\]
and $x_{j,k}(u^\wp/x_{i,p})\in G(I^\wp)$. This implies $\deg_{x_j}(u)=k-1$ and $\deg_{x_j}(v)\geq k$. Then
\[(x_ju/x_i)^\wp = x_{j,k}(u^\wp/x_{i,p}) \in G(I^\wp)\]
and thus $x_ju/x_i\in G(I)$.
\end{proof}

\begin{Theorem}\label{Thm:I^[nu(I)]Polymatroidal}
		Let $I\subset S$ be a monomial ideal generated in degree two. Then $I^{[\nu(I)]}$ is a polymatroidal ideal.
	\end{Theorem}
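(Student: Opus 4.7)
The plan is to reduce to the already-known squarefree (edge-ideal) case by passing to the polarization and then descending back to $I$ via Lemma~\ref{Lemma:IwpPolym=>IPolym}.

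First, I would observe that $I^{[\nu(I)]}$ is generated in a single degree, namely $2\nu(I)$, since each generator is a product of exactly $\nu(I)$ quadratic generators of $I$; this is the basic prerequisite to even ask whether it is polymatroidal.

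Next, I would polarize. Because $I$ is quadratic, every $u\in G(I)$ has the form $x_i^2$ or $x_ix_j$ with $i\neq j$, so $u^\wp$ is of the form $x_{i,1}x_{i,2}$ or $x_{i,1}x_{j,1}$. Hence $I^\wp$ is a squarefree quadratic monomial ideal, i.e.\ $I^\wp=I(G)$ for a finite simple graph $G$ on the vertex set $\{x_{i,j}\}$. By Proposition~\ref{polarization proposition}(a),(b), we have $\nu(I^\wp)=\nu(I)$ and
\[
\bigl(I^{[\nu(I)]}\bigr)^\wp\ =\ (I^\wp)^{[\nu(I^\wp)]}\ =\ I(G)^{[\nu(G)]}.
\]

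Now the Edmonds--Fulkerson theorem (recalled in the paragraph preceding Lemma~\ref{Lemma:IwpPolym=>IPolym}) asserts that $I(G)^{[\nu(G)]}$ is a matroidal ideal, in particular polymatroidal. Therefore $\bigl(I^{[\nu(I)]}\bigr)^\wp$ is polymatroidal, and applying Lemma~\ref{Lemma:IwpPolym=>IPolym} we conclude that $I^{[\nu(I)]}$ itself is polymatroidal, as desired.

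In this approach there is no real obstacle: the content sits entirely in the two preparatory results (the polarization compatibility of matching powers in Proposition~\ref{polarization proposition}(b) and the descent lemma \ref{Lemma:IwpPolym=>IPolym}), plus the classical matroidal property of the top matching power of an edge ideal. The only point to double-check is that $I^\wp$ is indeed generated in degree two, which is immediate from the quadratic hypothesis on $I$; without this hypothesis the reduction to a graph edge ideal would break down, explaining why the statement is confined to degree-two monomial ideals.
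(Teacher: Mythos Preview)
Your proof is correct and follows essentially the same route as the paper's own proof: polarize to reduce to the edge ideal $I^\wp=I(G)$, invoke Proposition~\ref{polarization proposition}(a),(b) together with the Edmonds--Fulkerson result that $I(G)^{[\nu(G)]}$ is matroidal, and then descend via Lemma~\ref{Lemma:IwpPolym=>IPolym}. The only differences are cosmetic---you spell out why $I^\wp$ is quadratic and why $I^{[\nu(I)]}$ is equigenerated---but the argument is the same.
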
	
 
	\begin{proof}
		By Proposition~\ref{polarization proposition}(a) we have $k=\nu(I)=\nu(I^\wp)$. By Proposition~\ref{polarization proposition}(b), $(I^{[k]})^\wp=(I^\wp)^{[k]}$. Moreover, since $I^\wp$ is the edge ideal of some graph, it is known that $(I^\wp)^{[k]}$ is polymatroidal. Then Lemma~\ref{Lemma:IwpPolym=>IPolym} implies that $I^{[k]}$ is polymatroidal as well.
	\end{proof}
	
\begin{Corollary}
		Let $I\subset S$ be a monomial ideal generated in degree two. Then $\reg(I^{[\nu(I)]})=2\nu(I)$.
\end{Corollary}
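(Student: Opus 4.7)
The plan is to combine the previous theorem with the fact that polymatroidal ideals have linear resolutions. By Theorem~\ref{Thm:I^[nu(I)]Polymatroidal}, since $I$ is generated in degree two, the top matching power $I^{[\nu(I)]}$ is polymatroidal. As recalled in the excerpt (via \cite[Proposition~8.2.1]{HHBook2011}), every polymatroidal ideal has a linear resolution, so the regularity of $I^{[\nu(I)]}$ is equal to the common degree of its minimal monomial generators.

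It then suffices to identify this common degree. A generating set of $I^{[\nu(I)]}$ is $\Omega = \{u_1\cdots u_{\nu(I)} : u_i \in G(I),\ \supp(u_i)\cap\supp(u_j)=\emptyset\ \text{for}\ i\neq j\}$, and $G(I^{[\nu(I)]})$ is a subset of $\Omega$. Since every $u_i\in G(I)$ has $\deg u_i = 2$, every element of $\Omega$ has degree $2\nu(I)$. Because $\nu(I)$ is the monomial grade of $I$, the set $\Omega$ is nonempty, so $G(I^{[\nu(I)]})\subseteq\Omega$ consists entirely of monomials of degree $2\nu(I)$. Combined with the linearity of the resolution, this yields $\reg(I^{[\nu(I)]}) = 2\nu(I)$.

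There is no real obstacle here: the entire content of the corollary is packaged in Theorem~\ref{Thm:I^[nu(I)]Polymatroidal} together with the standard fact that polymatroidal ideals are generated in a single degree and have linear resolutions.
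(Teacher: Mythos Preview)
Your proof is correct and follows essentially the same approach as the paper: invoke Theorem~\ref{Thm:I^[nu(I)]Polymatroidal} to get that $I^{[\nu(I)]}$ is polymatroidal, hence has a linear resolution, and observe that it is generated in degree $2\nu(I)$. The paper's proof is just a one-line version of yours.
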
	
\begin{proof}
    Since polymatroidal ideals have linear resolutions and $I^{[\nu(I)]}$ is generated in degree $2\nu(I)$, the regularity formula follows.
\end{proof}
The above result and Corollary~\ref{corollary:normalized depth zero} are no longer valid for monomial ideals generated in a single degree bigger than two.  For instance, for the cubic ideal \[I=(x_1x_2^2, x_2x_3^2, x_3x_4^2, x_4x_1^2)\] of $S=K[x_1,\dots,x_4]$ we have $\nu(I)=2$ but $I^{[2]}$ does not have a linear resolution and $g_I(2)=1\neq 0$.

	
	\section{Edge ideals of weighted oriented graphs}\label{sec:2-EreyFic}
	
        In this section, we focus our attention on matching powers of edge ideals of weighted oriented graphs. The interest in these ideals stemmed from their relevance in coding theory, in particular in the study of Reed-Muller type codes \cite{MPV}. Recently, these ideals have been the subject of many research papers in combinatorial commutative algebra, e.g. \cite{BCDMS, BDS23, CK, HLMRV, KBLO, PRT}. Hereafter, by a graph $G$ we mean a finite simple undirected graph without isolated vertices.

        A (\textit{vertex})-\textit{weighted oriented graph} $\mathcal{D}=(V(\mathcal{D}), E(\mathcal{D}), w)$ consists of an underlying graph $G$ on which each edge is given an orientation and it is equipped with a \textit{weight function} $w:V(G)\rightarrow\mathbb{Z}_{\ge1}$. The \textit{weight} $w(i)$ of a vertex $i$ is denoted by
        $w_i$. The directed edges of $\mathcal{D}$ are denoted by pairs $(i,j)\in E(\mathcal{D})$ to reflect the orientation, hence $(i,j)$ represents an edge directed from $i$ to $j$. The \textit{edge ideal} of $\mathcal{D}$ is defined as the ideal
        $$
        I(\mathcal{D})\ =\ (x_ix_j^{w_j}\ :\ (i,j)\in E(\mathcal{D}))
        $$
        of the polynomial ring $S=K[x_i:i\in V(G)]$. If $w_i=1$ for all $i\in V(G)$, then $I(\mathcal{D})=I(G)$ is the usual edge ideal of $G$. 
        \begin{Remark}\label{remark: assumption on sources}
        \rm If $i\in V(G)$ is a \textit{source}, that is a vertex such that $(j,i)\notin E(\mathcal{D})$ for all $j$, then $\deg_{x_i}(I(\mathcal{D}))=1$. Therefore, hereafter we assume that $w_i=1$ for all sources $i\in V(G)$.
        \end{Remark}

	Firstly, we establish the homological comparison between the matching powers $I(\mathcal{D})^{[k]}$ and $I(G)^{[k]}$, where $G$ is the underlying graph of $\mathcal{D}$. The assumption in Remark~\ref{remark: assumption on sources} is crucial for the statement (e) of Theorem~\ref{Thm:comparison}. 
	\begin{Theorem}\label{Thm:comparison}
		Let $\mathcal{D}$ be a weighted oriented graph with underlying graph $G$. Then, the following statements hold.
		\begin{enumerate}
			\item[\textup{(a)}] $\nu(I(\mathcal{D}))=\nu(I(G))=\nu(G)$.
                \item[\textup{(b)}] $\pd(I(G)^{[k]})\le\pd(I(\mathcal{D})^{[k]})$, for all $1\le k\le\nu(G)$.
                \item[\textup{(c)}] $\reg(I(G)^{[k]})\le\reg(I(\mathcal{D})^{[k]})$, for all $1\le k\le\nu(G)$.
			\item[\textup{(d)}] $\beta_i(I(G)^{[k]})\le\beta_i(I(\mathcal{D})^{[k]})$, for all $1\le k\le\nu(G)$ and $i$.
			\item[\textup{(e)}] $g_{I(\mathcal{D})}(k)\le g_{I(G)}(k)+\sum\limits_{i\in V(G)}w_i-|V(G)|$, for all $1\le k\le\nu(G)$.
		\end{enumerate}
	\end{Theorem}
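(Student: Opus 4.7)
My plan is to prove the five items in the order (a), (d), (b), (c), (e).

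Part (a) follows from a direct bijection. The minimal monomial generators of $I(\mathcal{D})$ (resp.\ $I(G)$) are in bijection with the edges of $\mathcal{D}$ (resp.\ $G$), which coincide after forgetting orientation. Two generators $x_ix_j^{w_j}$ and $x_{i'}x_{j'}^{w_{j'}}$ of $I(\mathcal{D})$ have disjoint support if and only if the edges $\{i,j\}$ and $\{i',j'\}$ share no vertex in $G$, so a maximum support-disjoint family of generators of either ideal corresponds to a maximum matching of $G$, yielding $\nu(I(\mathcal{D}))=\nu(I(G))=\nu(G)$.

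For parts (b), (c), (d) I would prove the multigraded refinement of (d) via an $\lcm$-lattice comparison, and then extract (b) and (c). First, apply Proposition~\ref{polarization proposition}(b) to pass to the squarefree polarization $(I(\mathcal{D})^{[k]})^\wp\subset\widetilde S$, whose minimal generators are $u_M^\wp=\prod_{(i,j)\in M}x_{i,1}\prod_{s=1}^{w_j}x_{j,s}$, indexed by size-$k$ matchings $M$ of $G$; this preserves all graded Betti numbers. Extending $I(G)^{[k]}$ flatly to $\widetilde S$ (also preserving Betti numbers), its generators become $u_M=\prod_{(i,j)\in M}x_{i,1}x_{j,1}$. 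Define the natural projection
\[
\pi\colon L\big((I(\mathcal{D})^{[k]})^\wp\big)\longrightarrow L\big(I(G)^{[k]}\big)
\]
that erases the polarization variables $x_{j,s}$ with $s\ge2$ and keeps the $x_{\bullet,1}$-part. Because the polarization variables and the base variables $x_{\bullet,1}$ lie in disjoint sets, one has $\pi(\lcm_{M\in\mathcal{S}}u_M^\wp)=\lcm_{M\in\mathcal{S}}u_M$, so $\pi$ is a join-preserving surjection of atomic lattices sending $\hat 0$ to $\hat 0$ and atoms to atoms. Applying the Gasharov--Peeva--Welker theorem on $\lcm$-lattice comparisons then yields, for every $i$ and every multidegree $\alpha$,
\[
\beta_{i,\alpha}(I(G)^{[k]})\ \le\ \sum_{\alpha'\in\pi^{-1}(\alpha)}\beta_{i,\alpha'}\big((I(\mathcal{D})^{[k]})^\wp\big).
\]
Summing over $\alpha$ yields (d). For (b), if $\beta_i(I(G)^{[k]})\ne 0$ then some fibre term above must be nonzero, forcing $\beta_i(I(\mathcal{D})^{[k]})\ne 0$; taking the maximal such~$i$ gives $\pd(I(G)^{[k]})\le\pd(I(\mathcal{D})^{[k]})$. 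For (c) each $\alpha'\in\pi^{-1}(\alpha)$ equals $\alpha$ times a monomial in polarization variables, so $|\alpha'|\ge|\alpha|$; hence any nonzero $\beta_{i,j}(I(G)^{[k]})$ produces a nonzero $\beta_{i,j'}(I(\mathcal{D})^{[k]})$ with $j'\ge j$, giving $\reg(I(G)^{[k]})\le\reg(I(\mathcal{D})^{[k]})$.

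For (e), apply the Auslander--Buchsbaum formula to Definition~\ref{def:gI-MononomialCase} to write $g_I(k)=|{\bf deg}(I)|-\pd(I^{[k]})-\textup{indeg}(I^{[k]})$. Under the convention of Remark~\ref{remark: assumption on sources} one has $\deg_{x_i}(I(\mathcal{D}))=w_i$ for every $i\in V(G)$, hence $|{\bf deg}(I(\mathcal{D}))|=\sum_i w_i$, while $|{\bf deg}(I(G))|=|V(G)|$. The desired inequality then reduces to
\[
\pd(I(G)^{[k]})+\textup{indeg}(I(G)^{[k]})\ \le\ \pd(I(\mathcal{D})^{[k]})+\textup{indeg}(I(\mathcal{D})^{[k]}).
\]
The projective-dimension part is (b). For the initial degrees, $\textup{indeg}(I(G)^{[k]})=2k$ since $I(G)^{[k]}$ is generated in degree $2k$, while any generator $U_M=\prod_{(i,j)\in M}x_ix_j^{w_j}$ of $I(\mathcal{D})^{[k]}$ has degree $k+\sum_{j\in T(M)}w_j\ge 2k$, so $\textup{indeg}(I(\mathcal{D})^{[k]})\ge 2k$ as well. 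The main obstacle will be the $\lcm$-lattice step: verifying that $\pi$ is well-defined between the two lattices and satisfies the join-preserving surjection hypothesis of Gasharov--Peeva--Welker in the correct direction. Once this lattice comparison is established, parts (b), (c) and (e) follow almost mechanically from (d).
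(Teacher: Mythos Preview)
Your proof is correct and, at the operational level, coincides with the paper's argument: both polarize $I(\mathcal{D})^{[k]}$ and then pass to the ``first--index'' variables $x_{\bullet,1}$ to recover $I(G)^{[k]}$; your projection $\pi$ is exactly the paper's monomial localization of $J^\wp$ at $P=(x_{1,1},\dots,x_{n,1})$, after which the paper observes $J^\wp(P)=\sqrt{J}=I(G)^{[k]}$. The only difference is the justification of the Betti--number inequality for this map: the paper cites the elementary fact (via \cite[Lemma 1.12]{HMRZ021a}) that a monomial localization admits a possibly non--minimal resolution built from the minimal one, giving $\beta_i(I(P))\le\beta_i(I)$, $\pd(I(P))\le\pd(I)$ and $\reg(I(P))\le\reg(I)$ directly, whereas you invoke the Gasharov--Peeva--Welker $\lcm$--lattice comparison. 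Both routes are valid here since your $\pi$ is indeed a join--preserving surjection bijective on atoms (though you should double--check that the version of the GPW theorem you cite is stated for surjections, not only for isomorphisms, of $\lcm$--lattices; the localization argument sidesteps this). Parts (a) and (e) are handled essentially identically in both proofs.
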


    For the proof we recall a few basic facts. Let $I\subset S$ be a monomial ideal.
    \begin{enumerate}
    	\item[(i)] We have $\beta_{i,j}(I)=\beta_{i,j}(I^\wp)$ for all $i$ and $j$ \cite[Corollary 1.6.3]{HHBook2011}.
    	\item[(ii)] For a monomial $u\in S$, we set $\sqrt{u}=\prod_{i\in\supp(u)}x_i$. If $G(I)=\{u_1,\dots,u_m\}$, then  \cite[Proposition 1.2.4]{HHBook2011} gives
    	$$\sqrt{I}=(\sqrt{u_1},\dots,\sqrt{u_m}).$$
    	\item[(iii)] Let $P$ be a monomial prime ideal of $S$. Let $S(P)$ be the polynomial ring in the variables which generate $P$. The \textit{monomial localization} of $I$ at $P$ is the monomial ideal $I(P)$ of $S(P)$ which is obtained from $I$ by the substitution $x_i\mapsto1$ for all $x_i\notin P$. The monomial localization can also be described as the saturation $I:(\prod_{x_i\notin P}x_i)^\infty$.
     
        If $\FF$ is the minimal (multi)graded free $S$-resolution of $I$, one can construct, starting from $\FF$, a possibly non-minimal (multi)graded free $S$-resolution of $I(P)$ \cite[Lemma 1.12]{HMRZ021a}. It follows from this construction that $\beta_{i}(I(P))\le\beta_{i}(I)$ for all $i$. Moreover, $\pd(I(P))\le\pd(I)$ and $\reg(I(P))\le\reg(I)$.
    \end{enumerate}

    \begin{proof}
    	Statement (a) is clear. To prove (b), (c) and (d), set $J=I(\mathcal{D})^{[k]}$.
     Assume that $I(\mathcal{D})$ is a fully supported ideal of $S=K[x_1,\dots, x_n]$. Let $P=(x_{1,1},\dots,x_{n,1})$. Identifying $x_{i,1}$ with $x_i$ for all $i$, by applying (ii), $J^\wp(P)$ can be identified with $\sqrt{J}$. Then by (i) and (iii) we obtain
     \[\beta_{i}(\sqrt{J})=\beta_{i}(J^\wp(P))\leq \beta_{i}(J^\wp)=\beta_{i}(J)\]
     for all $i$. To complete the proof, we will show that $\sqrt{J}=I(G)^{[k]}$. For this aim, let $v\in G(J)$. Then $v=(x_{i_1}x_{j_1}^{w_{j_1}})\cdots(x_{i_k}x_{j_k}^{w_{j_k}})$ with $(i_1,j_1),\dots,(i_k,j_k)\in E(\mathcal{D})$ and the corresponding undirected edges form a $k$-matching of $G$. Thus $\sqrt{v}=(x_{i_1}x_{j_1})\cdots(x_{i_k}x_{j_k})\in I(G)^{[k]}$ and consequently $\sqrt{J}\subseteq I(G)^{[k]}$. Conversely, let $u=(x_{i_1}x_{j_1})\cdots(x_{i_k}x_{j_k})\in G(I(G)^{[k]})$ with $\{\{i_1,j_1\},\dots,\{i_k,j_k\}\}$ a $k$-matching of $G$. Then $(i_1,j_1),\dots,(i_k,j_k)\in E(\mathcal{D})$ up to relabelling. So $v=(x_{i_1}x_{j_1}^{w_{j_1}})\cdots(x_{i_k}x_{j_k}^{w_{j_k}})\in J$ and $\sqrt{v}=u\in\sqrt{J}$. This shows that $I(G)^{[k]}\subseteq\sqrt{J}$. Equality follows.
    	
    	It remains to prove (e). Let $L$ be a monomial ideal of $S$. By the Auslander--Buchsbaum formula we have $\depth(S/L)=n-1-\pd(L)$. Hence, for all $1\le k\le\nu(L)$ we can rewrite $g_L(k)$ as
    	$$
    	g_L(k)=|{\bf deg}(L)|-\pd(L^{[k]})-\textup{indeg}(L^{[k]}).
    	$$
    	By (b) we have $\pd(I(G)^{[k]})\le\pd(I(\mathcal{D})^{[k]})$ for all $k$. It is clear that $|{\bf deg}(I(G))|=n$ and $\textup{indeg}(I(G)^{[k]})=2k\le\textup{indeg}(I(\mathcal{D})^{[k]})$ for all $1\le k\le\nu(G)$. Therefore,
    	\begin{eqnarray*}
    		g_{I(\mathcal{D})}(k)&=&|{\bf deg}(I(\mathcal{D}))|-\pd(I(\mathcal{D})^{[k]})-\textup{indeg}(I(\mathcal{D})^{[k]})\\
    		&\le&|{\bf deg}(I(\mathcal{D}))|-\pd(I(G)^{[k]})-\textup{indeg}(I(G)^{[k]})\\
    		&=& n-\pd(I(G)^{[k]})-\textup{indeg}(I(G)^{[k]})+|{\bf deg}(I(\mathcal{D}))|-n\\
    		&=& g_{I(G)}(k)+|{\bf deg}(I(\mathcal{D}))|-n.
    	\end{eqnarray*}
        Since $\deg_{x_i}(I(\mathcal{D}))=w_i$ for all $i$, we have $|{\bf deg}(I(\mathcal{D}))|=\sum_{i=1}^nw_i$, as wanted.
    \end{proof}
	
	The inequalities in (b), (c), (d) and (e) need not to be equalities as one can see in the next example.
	\begin{Example}
		\rm Let $\mathcal{D}$ be the oriented 4-cycle with all vertices having weight 2 and with edge set $E(\mathcal{D})=\{(a,b),(b,c),(c,d),(d,a)\}$. Then $I(G)^{[2]}=(abcd)$, while $I(\mathcal{D})^{[2]}=(ab^2cd^2,a^2bc^2d)$. By using \textit{Macaulay2} \cite{GDS} and the package \cite{FPack2}, we checked that $\pd(I(G)^{[2]})=1<2=\pd(I(\mathcal{D})^{[2]})$, $\reg(I(G)^{[2]})=4<7=\reg(I(\mathcal{D})^{[2]})$, $\beta_1(I(G)^{[2]})=0<1=\beta_1(I(\mathcal{D})^{[2]})$, and $g_{I(G)}(2)=1< 5=g_{I(\mathcal{D})}(2)+\sum_{i=1}^4w_i-4$.
	\end{Example}


        Hereafter, we concentrate our attention on edge ideals of vertex-weighted oriented graphs. Let $\mathcal{D}'$ and $\mathcal{D}$ be weighted oriented graphs with underlying graphs $G'$ and $G$ respectively. We say $\mathcal{D}'$ is a \textit{weighted oriented subgraph} of $\mathcal{D}$ if the vertex and edge sets of $\mathcal{D}'$ are contained in respectively those of $\mathcal{D}$ and the weight functions coincide on $V(\mathcal{D}')$. A weighted oriented subgraph $\mathcal{D}'$ of $\mathcal{D}$ is called \textit{induced weighted oriented subgraph} of $\mathcal{D}$ if $G'$ is an induced subgraph of $G$.
        
        Firstly, we turn to the problem of bounding the regularity of matching powers of edge ideals. We begin with the so-called Restriction Lemma.
        
	\begin{Lemma}\label{lem:induced subgraph}
		Let $\mathcal{D}'$ be an induced weighted oriented subgraph of $\mathcal{D}$. Then 
		\begin{enumerate}
			\item[\textup{(a)}] $\beta_{i,{\bf a}}(I(\mathcal{D}')^{[k]}) \leq \beta_{i,{\bf a}}(I(\mathcal{D})^{[k]})$ for all $i$ and ${\bf a}\in \mathbb{Z}^n$.
			\item[\textup{(b)}] $\reg(I(\mathcal{D}')^{[k]}) \leq \reg(I(\mathcal{D})^{[k]})$.
		\end{enumerate}
	\end{Lemma}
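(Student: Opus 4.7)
The plan rests on one clean combinatorial observation: because $\mathcal{D}'$ is an induced weighted oriented subgraph of $\mathcal{D}$, we have
\[
G(I(\mathcal{D}')^{[k]})\ =\ \bigl\{u\in G(I(\mathcal{D})^{[k]}) : \supp(u)\subseteq V(\mathcal{D}')\bigr\}.
\]
Indeed, a minimal generator of $I(\mathcal{D})^{[k]}$ is a product $\prod_{\ell=1}^{k}x_{i_\ell}x_{j_\ell}^{w_{j_\ell}}$ attached to a $k$-matching $\{(i_1,j_1),\dots,(i_k,j_k)\}$ of $\mathcal{D}$, whose support equals $\bigcup_{\ell}\{i_\ell,j_\ell\}$. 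This support lies in $V(\mathcal{D}')$ if and only if each $i_\ell,j_\ell$ belongs to $V(\mathcal{D}')$, and by the inducedness of $\mathcal{D}'$ this forces every $(i_\ell,j_\ell)$ to lie in $E(\mathcal{D}')$, so the generator already belongs to $G(I(\mathcal{D}')^{[k]})$; the reverse inclusion is immediate.

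For part (a) I view both ideals inside the common ambient polynomial ring $S=K[x_i:i\in V(\mathcal{D})]$. Extending $I(\mathcal{D}')^{[k]}$ from $K[x_i:i\in V(\mathcal{D}')]$ to $S$ by flat base change preserves its multigraded Betti numbers in multidegrees $\mathbf{a}$ with $\supp(\mathbf{a})\subseteq V(\mathcal{D}')$ and makes them vanish otherwise; in the latter case (a) is trivial. When $\supp(\mathbf{a})\subseteq V(\mathcal{D}')$, I appeal to the standard principle that for any monomial ideal $I\subset S$ the multigraded Betti number $\beta_{i,\mathbf{a}}(I)$ depends only on the subideal $I_{\le\mathbf{a}}$ generated by those $u\in G(I)$ with $u\mid x^{\mathbf{a}}$ (a consequence, for instance, of the upper Koszul simplicial complex description or the LCM-lattice construction of multigraded Betti numbers). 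Since every such divisor has support contained in $\supp(\mathbf{a})\subseteq V(\mathcal{D}')$, the first paragraph identifies $\bigl(I(\mathcal{D})^{[k]}\bigr)_{\le\mathbf{a}}$ with $\bigl(I(\mathcal{D}')^{[k]}\bigr)_{\le\mathbf{a}}$, whence $\beta_{i,\mathbf{a}}(I(\mathcal{D}')^{[k]})=\beta_{i,\mathbf{a}}(I(\mathcal{D})^{[k]})$---equality in fact, rather than a mere inequality.

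Part (b) is an immediate consequence of (a): writing $\reg(M)=\sup\{|\mathbf{a}|-i:\beta_{i,\mathbf{a}}(M)\ne 0\}$ for a finitely generated $\mathbb{Z}^n$-graded $S$-module $M$, every pair $(i,\mathbf{a})$ contributing to $\reg(I(\mathcal{D}')^{[k]})$ also contributes to $\reg(I(\mathcal{D})^{[k]})$. I do not foresee a deep obstacle in this argument; the only steps that require a moment of care are the trivial verification that extending $I(\mathcal{D}')^{[k]}$ to $S$ preserves its multigraded Betti data in multidegrees supported on $V(\mathcal{D}')$, and the choice of a convenient Betti-number formula (Hochster-type, Taylor-subcomplex, upper Koszul, or LCM lattice) making manifest the dependence on generators dividing $x^{\mathbf{a}}$.
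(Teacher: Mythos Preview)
Your argument is correct and follows the same route as the paper, which simply invokes the Restriction Lemma \cite[Lemma~1.2]{EHHM2022a}; your first paragraph verifies exactly the hypothesis needed to apply it---that $G(I(\mathcal{D}')^{[k]})$ consists of those minimal generators of $I(\mathcal{D})^{[k]}$ supported on $V(\mathcal{D}')$---and your second paragraph is a self-contained proof of that lemma via the dependence of $\beta_{i,\mathbf{a}}(I)$ on $I_{\le\mathbf{a}}$ alone. Your observation that one actually obtains equality (not merely an inequality) in multidegrees $\mathbf{a}$ with $\supp(\mathbf{a})\subseteq V(\mathcal{D}')$ is also correct.
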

	\begin{proof}
		It follows from \cite[Lemma 1.2]{EHHM2022a}.
	\end{proof}

	Let $\im(G)$ denote the \textit{induced matching number} of $G$. For any weighted oriented graph $\mathcal{D}$ with underlying graph $G$, let $\wim(\mathcal{D})$ denote the \textit{weighted induced matching number} of $\mathcal{D}$. That is,
    \begin{align*}
        \wim(\mathcal{D})=\max\big\{\!\sum_{i=1}^{m}w(y_i)\ :\ \ &\{\{x_1,y_1\},\dots,\{x_m,y_m\}\}\ \text{is an}\\&\ \text{induced matching of}\ G,\ \text{and}\ (x_i,y_i)\in E(\mathcal{D})\big\}.
    \end{align*}
	Notice that if $w_i=1$ for every $i\in V(\mathcal{D})$, then $\wim(\mathcal{D})=\im(G)$. Otherwise, we have the inequality $\wim(\mathcal{D})\geq \im(G)$. We extend the regularity lower bound given in \cite[Theorem~3.8]{BDS23} as follows.
	\begin{Proposition}\label{prop:regularity lower bound}
		Let $\mathcal{D}$ be a weighted oriented graph with underlying graph $G$. Then 
		\[\reg(I(\mathcal{D})^{[k]})\geq \wim(\mathcal{D})+k\]
		for all $1\leq k \leq \im(G)$.
	\end{Proposition}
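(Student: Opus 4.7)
The plan is to combine the Restriction Lemma (Lemma~\ref{lem:induced subgraph}) with a direct computation of the regularity on a carefully chosen induced subgraph. I pick an induced matching $M^*=\{\{x_1,y_1\},\dots,\{x_m,y_m\}\}$ of $G$ with $(x_i,y_i)\in E(\mathcal{D})$ and $\sum_{i=1}^m w(y_i)=\wim(\mathcal{D})$, and let $\mathcal{D}'$ be the weighted oriented subgraph induced by $\mathcal{D}$ on $V(M^*)$. Since $M^*$ is induced in $G$, the only edges of $\mathcal{D}'$ are those of $M^*$, so $\mathcal{D}'$ is a disjoint union of $m$ oriented weighted edges, with $I(\mathcal{D}')=(v_1,\dots,v_m)$ where $v_i:=x_iy_i^{w(y_i)}$. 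By Lemma~\ref{lem:induced subgraph}, $\reg(I(\mathcal{D}')^{[k]})\le \reg(I(\mathcal{D})^{[k]})$ for every $k$, so it suffices to show $\reg(I(\mathcal{D}')^{[k]})\ge \wim(\mathcal{D})+k$.

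The heart of the argument is to compute this regularity via flat base change. The $v_i$'s lie in pairwise disjoint sets of variables and hence form a regular sequence; the polynomial subring $R':=K[v_1,\dots,v_m]\subset S$, graded by $\deg(v_i)=1+w(y_i)$, makes $S$ a free $R'$-module, since each tensor factor $K[x_i,y_i]$ is free over $K[v_i]$. Under the identification $R'\cong K[z_1,\dots,z_m]$ the ideal $I(\mathcal{D}')^{[k]}$ corresponds to the squarefree Veronese ideal $I_{m,k}=(z_{i_1}\cdots z_{i_k}:1\le i_1<\cdots<i_k\le m)$, and flat base change yields $\beta^S_{i,\mathbf{a}}(I(\mathcal{D}')^{[k]})=\beta^{R'}_{i,\mathbf{a}}(I_{m,k})$ at every multidegree $\mathbf{a}$. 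Since $I_{m,k}$ is matroidal, hence has a linear resolution of projective dimension $m-k$, its unique top multigraded Betti number sits at multidegree $(1,\dots,1)$ in homological position $m-k$; in the weighted grading this gives $j-i=\sum_{i=1}^{m}(1+w(y_i))-(m-k)=\wim(\mathcal{D})+k$, completing the argument whenever $k\le m$.

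The main obstacle is the case $k>m$, where restricting to $V(M^*)$ alone is insufficient since $I(\mathcal{D}')^{[k]}=0$. To handle this I would extend $M^*$, as an ordinary (not necessarily induced) matching of $G$, by appending $k-m$ additional disjoint edges drawn from $G[V(G)\setminus V(M^*)]$. The existence of such an extension rests on a combinatorial lemma guaranteeing $\nu(G\setminus V(M^*))\ge k-m$ from the hypothesis $k\le \im(G)$ together with the fact that $M^*$ is necessarily a maximal induced matching (forced by the positivity of the weight function). The resulting product monomial lies in $I(\mathcal{D})^{[k]}$ and has degree at least $\wim(\mathcal{D})+k+(k-m)$, and the delicate point is verifying that this product remains a minimal generator of $I(\mathcal{D})^{[k]}$, which requires a careful check that no alternative disjoint-support pairing inside the augmented vertex set yields a proper monomial divisor in $G(I(\mathcal{D})^{[k]})$.
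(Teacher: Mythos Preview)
For $k\le m$ your argument is correct and is precisely the paper's: restrict to $\mathcal{D}'$ via the Restriction Lemma, then use the flat homomorphism $\phi\colon K[z_1,\dots,z_m]\to K[x_1,\dots,x_m,y_1,\dots,y_m]$, $z_i\mapsto v_i=x_iy_i^{w(y_i)}$, to transport the nonzero Betti number $\beta_{m-k,(1,\dots,1)}$ of the squarefree Veronese $(z_1,\dots,z_m)^{[k]}$ to $\beta_{m-k,\,m+\wim(\mathcal{D})}(I(\mathcal{D}')^{[k]})\ne 0$, giving $\reg\ge \wim(\mathcal{D})+k$. One minor point: what is needed is flatness of $\phi$, which follows because $v_1,\dots,v_m$ is a regular sequence on $S$; asserting that $S$ is free over $R'$ is stronger than necessary.

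Your treatment of the case $k>m$, however, has a genuine gap. The combinatorial lemma you invoke is actually true and needs only that $M^*$ is induced (not maximal): if $M'$ is any induced matching, then each edge $f\in M^*$ meets at most one edge of $M'$, since two distinct edges of $M'$ touching the two endpoints of $f$ would force $f\in E(G[V(M')])=M'$; hence at least $\im(G)-m$ edges of a maximum induced matching lie entirely in $G\setminus V(M^*)$. But the ``delicate point'' you flag is fatal as written. The product $u$ you build need not be a minimal generator of $I(\mathcal{D})^{[k]}$: the induced subgraph on your chosen $2k$ vertices may admit a second perfect matching whose associated monomial properly divides $u$. Concretely, take $M^*=\{\{a,b\}\}$ with $(a,b)\in E(\mathcal{D})$ and $w(b)>1$, append an edge $\{c,d\}$ from $G\setminus\{a,b\}$, and suppose $\{a,d\},\{b,c\}\in E(G)$ are oriented toward weight-$1$ targets; then $ad\cdot bc=abcd$ properly divides $ab^{w(b)}cd$, so your candidate is not in $G(I(\mathcal{D})^{[k]})$. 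You give no mechanism for choosing the appended edges so as to rule out such alternative matchings. The paper's own proof, for comparison, does not isolate a case $k>r$ at all: it works directly with an induced matching of size $r$ realizing $\wim(\mathcal{D})$ and invokes $\beta_{r-k,r}\big((z_1,\dots,z_r)^{[k]}\big)\ne 0$, which already presupposes $k\le r$.
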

	\begin{proof}
		The proof is similar to \cite[Theorem~2.1]{EHHM2022a}. We include the details for the sake of completeness. Let $\{\{x_1,y_1\},\dots ,\{x_r,y_r\}\}$ be an induced matching. Suppose that $(x_i,y_i)\in E(\mathcal{D})$ with $w(y_i)=t_i$ and $\sum_{i=1}^{r}t_i=\wim(\mathcal{D})$. Let $\mathcal{D}'$ be the induced weighted oriented subgraph of $\mathcal{D}$ on the vertices $x_1,\dots,x_r,y_1,\dots,y_r$. Then by Lemma~\ref{lem:induced subgraph} it suffices to show that
		\[\reg(I(\mathcal{D}')^{[k]})\geq \wim(\mathcal{D})+k.\]
		To this end, we set $I=I(\mathcal{D}')$ and we claim that
		\[\beta_{r-k,\wim(\mathcal{D})+r}(I^{[k]})\neq 0.\]
		Let $J = (z_1, \dots, z_r)$, where $z_1, \dots , z_r$ are new variables. Then $J^{[k]}$ is a squarefree strongly stable ideal in the polynomial ring $R = K[z_1, \dots, z_r]$. It was proved in \cite[Theorem~2.1]{EHHM2022a} that $\beta_{r-k, r}(J^{[k]})\neq 0$. 
		
		Define the map $\phi: R \to S = K[x_1, \dots , x_r, y_1, \dots , y_r]$ by $z_i \mapsto x_iy_i^{t_i}$ for $i = 1, \dots , r$. Since $x_1y_1^{t_1}, \dots , x_ry_r^{t_r}$ is a regular sequence on $S$, the $K$-algebra homomorphism $\phi$ is flat. If $\mathbb{F}$ is the minimal free resolution of $J^{[k]}$ over $R$, then $\mathbb{G}:\mathbb{F}\otimes_R S$ is the minimal free resolution of $I^{[k]}$ over $S$. It follows that
		\[\beta_{i,(a_1,\dots ,a_r)}(J^{[k]})=\beta_{i,(a_1,\dots ,a_r,t_1a_1,\dots ,t_ra_r)}(I^{[k]})\]
		for any $i$ and $(a_1,\dots ,a_r)\in \mathbb{Z}^r$. Then,
		\[0\neq\beta_{r-k,r}(J^{[k]})=\beta_{r-k,(1,\dots ,1)}(J^{[k]})=\beta_{r-k,(1,\dots ,1,t_1,\dots ,t_r)}(I^{[k]})\]
		and $\beta_{r-k,\wim(\mathcal{D})+r}(I^{[k]})\neq 0$ as desired.
	\end{proof}

    

 We close this section by providing a lower bound for the projective dimension of matching powers of edge ideals. Let $P_n$ be the \textit{path of length} $n$. That is, $V(P_n)=[n]$ and $E(P_n)=\{\{1,2\},\{2,3\},\dots,\{n-1,n\}\}$. We denote by $\mathcal{P}_n$ a weighted oriented path of length $n$, that is, a weighted oriented graph whose underlying graph is $P_n$. It is well--known that $\nu(P_n)=\lfloor\frac{n}{2}\rfloor$.

 For a weighted oriented graph $\mathcal{D}$ with underlying graph $G$, we denote by $\ell(\mathcal{D})$ the maximal length of an induced path of $G$. 
 \begin{Proposition}\label{prop:projective dimension lower bound}
     Let $\mathcal{D}$ be a weighted oriented graph. Then $\nu(I(\mathcal{D}))\ge\lfloor\frac{\ell(\mathcal{D})}{2}\rfloor$ and
     $$
     \pd(I(\mathcal{D})^{[k]})\ \ge\ \begin{cases}
                \ell(\mathcal{D})-\lceil\frac{\ell(\mathcal{D})}{3}\rceil-k&\text{if}\ 1\le k\le\lceil\frac{\ell(\mathcal{D})}{3}\rceil,\\
                \ell(\mathcal{D})-2k&\text{if}\ \lceil\frac{\ell(\mathcal{D})}{3}\rceil+1\le k\le\lfloor\frac{\ell(\mathcal{D})}{2}\rfloor.
            \end{cases}
     $$
 \end{Proposition}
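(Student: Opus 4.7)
The plan is to reduce the statement, by two successive passages, to the case of an ordinary (unweighted) path $P_n$, where the inequality becomes a known fact about squarefree powers of path edge ideals.

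For the first assertion, Theorem~\ref{Thm:comparison}(a) gives $\nu(I(\mathcal{D}))=\nu(G)$, and since $G$ contains by definition an induced path on $\ell(\mathcal{D})$ vertices, we have $\nu(G)\ge\nu(P_{\ell(\mathcal{D})})=\lfloor\ell(\mathcal{D})/2\rfloor$. In particular $I(\mathcal{D})^{[k]}\ne 0$ for every $k$ in the range appearing in the statement, so the projective dimensions on the left-hand side are well defined.

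For the projective dimension estimate, set $\ell=\ell(\mathcal{D})$ and let $\mathcal{P}$ be the induced weighted oriented subgraph of $\mathcal{D}$ whose underlying graph is an induced path of $G$ on $\ell$ vertices; so $\mathcal{P}$ is a weighted oriented path on $\ell$ vertices with underlying graph $P_\ell$. Summing the multigraded inequality in Lemma~\ref{lem:induced subgraph}(a) over all multidegrees gives $\beta_i(I(\mathcal{P})^{[k]})\le\beta_i(I(\mathcal{D})^{[k]})$ for every $i$, hence $\pd(I(\mathcal{P})^{[k]})\le\pd(I(\mathcal{D})^{[k]})$. Applying Theorem~\ref{Thm:comparison}(b) to $\mathcal{P}$ yields
\[
\pd(I(P_\ell)^{[k]})\ \le\ \pd(I(\mathcal{P})^{[k]})\ \le\ \pd(I(\mathcal{D})^{[k]}),
\]
so it suffices to prove the claimed piecewise lower bound for $\pd(I(P_n)^{[k]})$ with $n=\ell$.

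The main obstacle is thus the unweighted case, namely exhibiting, for every $1\le k\le\lfloor n/2\rfloor$, a nonzero Betti number of $I(P_n)^{[k]}$ in the homological position predicted by the two-case formula. My plan for this step is to proceed by constructing explicit nonzero elements in the upper Koszul simplicial complex of $I(P_n)^{[k]}$ at a suitable squarefree multidegree: in the regime $k\le\lceil n/3\rceil$ one chooses a multidegree supported on a set of size $n-\lceil n/3\rceil+k$ cut out by the complement of an optimal $3$-covering of $P_n$ by disjoint $P_3$-blocks, producing a nonvanishing Betti number in homological degree $n-\lceil n/3\rceil-k$; in the regime $\lceil n/3\rceil<k\le\lfloor n/2\rfloor$ one works with a maximum matching and its neighbourhood along the path to place a nonvanishing Betti number in homological degree $n-2k$. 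The two ranges agree at $k=\lceil n/3\rceil$, giving the common value $n-2\lceil n/3\rceil$. Alternatively, if an explicit computation of $\pd(I(P_n)^{[k]})$ in the existing literature on squarefree powers covers both ranges directly, then the proof concludes by a single citation.
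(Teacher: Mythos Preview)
Your reduction to the unweighted path $P_\ell$ via Lemma~\ref{lem:induced subgraph} and Theorem~\ref{Thm:comparison}(b) is correct and is exactly what the paper does. The difference is in how the path case is handled.

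The paper does not construct Koszul cycles; it simply invokes \cite[Theorem~2.10]{CFL}, which computes the normalized depth function of the path exactly:
\[
g_{I(P_\ell)}(k)=\begin{cases}\lceil\ell/3\rceil-k & 1\le k\le\lceil\ell/3\rceil,\\ 0 & \lceil\ell/3\rceil+1\le k\le\lfloor\ell/2\rfloor.\end{cases}
\]
Since for a fully supported squarefree ideal one has $g_I(k)=n-\pd(I^{[k]})-\textup{indeg}(I^{[k]})$ and $\textup{indeg}(I(P_\ell)^{[k]})=2k$, this yields $\pd(I(P_\ell)^{[k]})$ on the nose, and the inequality for $\mathcal{D}$ follows. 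So your ``alternative'' of closing the argument by a single citation is precisely what happens, and the reference you are missing is \cite{CFL}.

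Your Koszul-complex plan, by contrast, is only a sketch: you describe which homological degrees you would target and gesture at the supporting multidegrees, but you do not actually produce the cycles or verify they are nonboundaries. As written this is a gap, not a proof; and given that the exact value of $\pd(I(P_\ell)^{[k]})$ is already in the literature, carrying out the direct construction would be unnecessary work.
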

\begin{proof}
    Let $\ell=\ell(\mathcal{D})$. There exists a subset $W$ of $V(\mathcal{D})$ such that the induced subgraph of $\mathcal{D}$ on $W$ is a weighted oriented path $\mathcal{P}_\ell$. Theorem \ref{Thm:comparison}(b) combined with Lemma \ref{lem:induced subgraph} implies that $\pd(I(P_\ell)^{[k]})\le\pd(I(\mathcal{P}_\ell)^{[k]})\le\pd(I(\mathcal{D})^{[k]})$. It was shown in \cite[Theorem 2.10]{CFL} that
    $$
		g_{I(P_\ell)}(k)\ =\ \begin{cases}
		\lceil\frac{\ell}{3}\rceil-k&\text{if}\ 1\le k\le\lceil\frac{\ell}{3}\rceil,\\
		\hfil0&\text{if}\ \lceil\frac{\ell}{3}\rceil+1\le k\le\lfloor\frac{\ell}{2}\rfloor.
		\end{cases}
		$$
    For a squarefree monomial ideal $I\subset S$, we have $g_I(k)=n-\pd(I^{[k]})-\textup{indeg}(I^{[k]})$. Hence, the assertion follows from the above formula.
\end{proof}

Although we only considered weighted oriented graphs in this section, our methods can be useful to prove analogous results for matching powers of edge ideals of edge-weighted graphs. An \textit{edge-weighted graph} $G_w$ consists of a graph $G$ equipped with a \textit{weight function} $w:E(G)\rightarrow\mathbb{Z}_{\ge1}$. The \textit{edge ideal} of $G_w$ is defined as the ideal
       $$
        I(G_w) = ((x_ix_j)^{w(e)} :e=\{i,j\}\in E(G))
        $$
        of $S=K[x_i:i\in V(G)]$, see \cite{PS13}. Notice that if the weight of every edge is $1$, then the edge ideal of $G_w$ coincides with that of $G$.


 
 
\section{Linearly related matching powers}\label{sec:3-EreyFic}
Let $I\subset S$ be a graded ideal generated in a single degree. We say $I$ is {\em linearly related}, if the first syzygy module of $I$ is generated by linear relations. In this section, we want to discuss which matching powers of the edge ideal $I(\mathcal{D})$ of a vertex-weighted oriented graph $\mathcal{D}$ are linearly related.

Let $I$ be a monomial ideal of $S$ generated in degree $d$. Let $G_I$ denote the graph with vertex set $G(I)$ and edge set 
	\[
	E(G_I)=\{\{u,v\}: u,v\in G(I)~\text{with}~\deg (\lcm(u,v))=d+1 \}.
	\]

	For all $u, v\in G(I)$ let  $G^{(u,v)}_I$ be the induced subgraph of $G_I$ whose vertex set is
	\[
	V(G^{(u,v)}_I)=\{w\in G(I)\: \text{$w$ divides $\lcm(u, v)$}\}.
	\]

	\medskip
	The following theorem provides a criterion through the graphs defined above to determine if a monomial ideal is linearly related. 
	
	\begin{Theorem}\label{connected criterion}\cite[Corollary~2.2]{BHZN18} 
		Let $I$ be a monomial ideal generated in degree $d$. Then $I$ is linearly related if and only if for all $u,v\in G(I)$  there is a path in $G^{(u,v)}_I$ connecting $u$ and $v$.	
	\end{Theorem}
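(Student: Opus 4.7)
The plan is to reinterpret the \emph{linearly related} condition as a purely multigraded statement about first Betti numbers, and then translate it into the combinatorics of the graphs $G_I^{(u,v)}$. First, I would record the standard reformulation: since $I$ is generated in degree $d$, it is linearly related if and only if $\beta_{1,\mathbf{a}}(I)=0$ for every multidegree $\mathbf{a}\in\ZZ^n$ with $|\mathbf{a}|>d+1$. The key tool would be the simplicial description of multigraded first Betti numbers, namely $\beta_{1,\mathbf{a}}(I)=\dim_K\tilde{H}_0(\Delta_{\mathbf{a}};K)$, where $\Delta_{\mathbf{a}}$ is the simplicial complex with vertex set $\{w\in G(I):w\text{ divides }x^{\mathbf{a}}\}$ and faces $F$ satisfying $\lcm(F)\mid x^{\mathbf{a}}$. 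Thus $\beta_{1,\mathbf{a}}(I)=0$ precisely when the $1$-skeleton of $\Delta_{\mathbf{a}}$ is connected (when nonempty).

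Since multigraded Betti numbers vanish outside lcm-multidegrees, one may restrict attention to $\mathbf{a}=\deg\lcm(u,v)$ for pairs $u,v\in G(I)$. For such $\mathbf{a}$, the vertex set of $\Delta_{\mathbf{a}}$ coincides with $V(G_I^{(u,v)})$, and the $1$-skeleton of $\Delta_{\mathbf{a}}$ contains $G_I^{(u,v)}$ as a spanning subgraph. An edge $\{w,w'\}$ of the $1$-skeleton with $\deg\lcm(w,w')>d+1$ is a ``non-linear'' edge, not present in $G_I^{(u,v)}$. The plan for the ``if'' direction is to show by induction on $|\mathbf{a}|$ that any such non-linear edge may be replaced by a path of linear edges inside $V(G_I^{(u,v)})$, using the hypothesis applied to $(w,w')$ at the strictly smaller multidegree $\deg\lcm(w,w')$, whose associated generators all divide $x^{\mathbf{a}}$ and hence remain inside $V(G_I^{(u,v)})$.

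For the ``only if'' direction, suppose $G_I^{(u,v)}$ is disconnected with $u$ and $v$ in distinct components. Setting $\mathbf{a}=\deg\lcm(u,v)$, we have $|\mathbf{a}|\geq d+2$. I would then consider the Koszul-type syzygy $\sigma_{u,v}=(v/\gcd(u,v))e_u-(u/\gcd(u,v))e_v$ of multidegree $\mathbf{a}$ and argue that $\sigma_{u,v}$ cannot be written as an $S$-linear combination of linear syzygies: every linear syzygy contributing to multidegree $\mathbf{a}$ arises from an edge of $G_I^{(u,v)}$, so the image in the multidegree-$\mathbf{a}$ strand of the linear part of the first syzygy module lies in the span of such edges, which cannot bridge the two components containing $u$ and $v$. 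Hence $\beta_{1,\mathbf{a}}(I)\ne 0$ and $I$ fails to be linearly related.

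The \textbf{main obstacle} is making the inductive refinement in the ``if'' direction precise: one must verify that the connected components of the $1$-skeleton of $\Delta_{\mathbf{a}}$ coincide with those of $G_I^{(u,v)}$, so that non-linear edges never artificially merge distinct linear components. This would be handled by descending induction on $|\mathbf{a}|$, the base case $|\mathbf{a}|=d+1$ being vacuous because then any edge $\{w,w'\}$ of $\Delta_{\mathbf{a}}$ automatically satisfies $\deg\lcm(w,w')=d+1$ and is therefore already a linear edge of $G_I^{(u,v)}$.
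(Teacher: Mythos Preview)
The paper does not supply its own proof of this statement; it is quoted as \cite[Corollary~2.2]{BHZN18} and used as a black box, so there is no in-paper argument to compare against.

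Your outline has the right shape but contains a genuine error in the simplicial step. As you define it, $\Delta_{\mathbf a}$ has vertex set $\{w\in G(I):w\mid x^{\mathbf a}\}$ and faces those $F$ with $\lcm(F)\mid x^{\mathbf a}$; but the latter condition is automatic once every element of $F$ divides $x^{\mathbf a}$, so your $\Delta_{\mathbf a}$ is always a full simplex and $\tilde H_0(\Delta_{\mathbf a};K)=0$ for every $\mathbf a$. The asserted identity $\beta_{1,\mathbf a}(I)=\dim_K\tilde H_0(\Delta_{\mathbf a};K)$ would then make every equigenerated monomial ideal linearly related, which is false. The correct simplicial description of $\beta_{1,\mathbf a}$ (via the lcm-lattice or the Taylor strand) requires $\lcm(F)=x^{\mathbf a}$, or an order complex of a strict interval, and is more delicate than what you wrote. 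You also swap the roles of the two directions: the worry that ``non-linear edges might artificially merge linear components'' belongs to the implication \emph{linearly related $\Rightarrow$ paths exist}, not the other way around.

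In fact no simplicial machinery is needed. For the ``if'' direction, given a path $u=u_0,u_1,\dots,u_s=v$ in $G_I^{(u,v)}$ one has the telescoping identity
\[
\sigma_{u,v}\;=\;\sum_{i=0}^{s-1}\frac{\lcm(u,v)}{\lcm(u_i,u_{i+1})}\,\sigma_{u_i,u_{i+1}},
\]
which already expresses every Koszul relation as an $S$-combination of linear ones; since the $\sigma_{u,v}$ generate the first syzygy module, $I$ is linearly related. For ``only if'', your syzygy argument is essentially correct: writing $\sigma_{u,v}=\sum_j f_j\sigma_{w_j,w_j'}$ with each $\{w_j,w_j'\}\in E(G_I)$ and restricting to multidegree $\lcm(u,v)$ forces $w_j,w_j'\in V(G_I^{(u,v)})$; projecting onto the free summands indexed by the component $C$ of $u$ in $G_I^{(u,v)}$ sends the right side to a genuine syzygy, while if $v\notin C$ the left side projects to $\tfrac{\lcm(u,v)}{u}e_u$, which is not. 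Replacing your $\tilde H_0$ formula by this direct syzygy calculation repairs the proof.
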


\begin{Lemma}\label{lem:generated in single degree}
	Let $I$ be a monomial ideal and let $1\leq k < \nu(I)$. Suppose that $I^{[k]}$ is generated in single degree. Then, there is an integer $d$ such that
	\begin{itemize}
		\item[\textup{(a)}] $I^{[k]}$ is generated in degree $dk$,
		\item[\textup{(b)}] $I^{[k+1]}$ is generated in degree $d(k+1)$, and
            \item[\textup{(c)}] if $u=u_1\dots  u_{k+1}\in G(I^{[k+1]})$, with each $u_i\in G(I)$ and $\gcd(u_i, u_j) =1$ for $i\neq j$, then $\deg(u_i)=d$ for each $i$.
	\end{itemize}

\end{Lemma}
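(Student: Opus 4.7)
The plan is to extract an integer $d$ from the single-degree assumption on $I^{[k]}$ and show simultaneously that the common degree $D$ of the generators of $I^{[k]}$ equals $kd$, and that every minimal generator of $I^{[k+1]}$ decomposes into support-disjoint factors of $G(I)$ each having degree exactly $d$. All three conclusions (a), (b), (c) then drop out.

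The crucial step is the following \emph{transfer of minimality}: whenever $w\in G(I^{[k+1]})$ admits a decomposition $w = w_1\cdots w_{k+1}$ with $w_i\in G(I)$ pairwise support-disjoint, then for every index $j$ the quotient $w/w_j$ lies in $G(I^{[k]})$. That $w/w_j$ lies in $I^{[k]}$ is clear from its decomposition. If some $v\in G(I^{[k]})$ properly divided $w/w_j$, then $\supp(v)\subseteq \supp(w/w_j)$ would be disjoint from $\supp(w_j)$; writing $v$ as a product of $k$ pairwise support-disjoint elements of $G(I)$ and adjoining $w_j$ produces a support-disjoint product of $k+1$ elements of $G(I)$, hence a monomial $v\cdot w_j\in I^{[k+1]}$ that properly divides $w$, contradicting $w\in G(I^{[k+1]})$.

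Once this is in hand the rest is bookkeeping. Since $I^{[k]}$ is generated in a single degree $D$, the transfer step gives $\sum_{i\ne j}\deg(w_i) = D$ for every $j\in\{1,\dots,k+1\}$, and subtracting any two such equations forces $\deg(w_1) = \cdots = \deg(w_{k+1})$; call this common value $d$. This is exactly (c), and it yields $D = kd$ and $\deg(w) = (k+1)d$. The value $d = D/k$ is the same across all minimal generators of $I^{[k+1]}$ because $D$ is fixed, so (b) follows, and (a) is the restatement $D = kd$. To confirm that $d$ is a genuine integer rather than a formal ratio, I invoke the hypothesis $k<\nu(I)$: there exist $k+1$ pairwise support-disjoint elements of $G(I)$ whose product lies in $I^{[k+1]}$, so $G(I^{[k+1]})$ is nonempty and supplies a witness $w$ to which the argument applies.

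The delicate point is the transfer-of-minimality claim; its proof hinges on the support-disjointness of the factors $w_i$, which is precisely what lets one append the "missing" factor $w_j$ to a potential proper divisor $v$ without destroying support-disjointness. Everything else is elementary arithmetic with degrees.
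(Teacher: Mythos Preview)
Your argument is correct and follows essentially the same route as the paper: both proofs start from a minimal generator $u=u_1\cdots u_{k+1}\in G(I^{[k+1]})$, use that each subproduct $u/u_j$ lies in $G(I^{[k]})$ to force all $\deg(u_i)$ equal, and then read off (a)--(c) from the single-degree hypothesis. The only difference is that you actually justify the ``transfer of minimality'' step $u/u_j\in G(I^{[k]})$, which the paper simply asserts with the word ``Observe''; your justification is sound and in fact clarifies a point the paper glosses over.
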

\begin{proof}
	Let $u=u_1\cdots u_{k+1}\in G(I^{[k+1]})$ with each $u_i\in G(I)$ and $\gcd(u_i, u_j)=1$ for $i\neq j$. Observe that $u/u_\ell\in G(I^{[k]})$ for any $\ell=1,\dots ,k+1$. First, we show that $\deg(u_i)=\deg(u_j)$ for each $i\neq j$. Without loss of generality, assume for a contradiction that $\deg(u_1)\neq \deg(u_2)$. Then $u_2u_3\cdots u_{k+1}$ and $u_1u_3\cdots u_{k+1}$ are minimal monomial generators of $I^{[k]}$ of different degrees, which is a contradiction. It follows that $u_1,\dots ,u_k$ are all of the same degree, say $d$. Now, suppose that $v=v_1\cdots v_{k+1}\in G(I^{[k+1]})$ with each $v_i\in G(I)$ and $\gcd(v_i,v_j)=1$ for $i\neq j$. By the above argument, each $v_i$ is of the same degree, say $d'$. Then $u_1\cdots u_k$ is a minimal monomial generator of $I^{[k]}$ of degree $dk$ whereas $v_1\cdots v_k$ is a minimal monomial generator of $I^{[k]}$ of degree $d'k$. Therefore $d=d'$ and $u$ and $v$ have the same degree.
\end{proof}

In \cite[Theorem~3.1]{BHZN18} it was proved that $I(G)^s$ is linearly related for some $s\geq 1$ if and only if $I(G)^k$ is linearly related for all $k\geq 1$. Unlike the ordinary powers of edge ideals, not all squarefree powers of $I(G)$ are linearly related if some squarefree power is linearly related. On the other hand, it was proved in \cite[Theorem~3.1]{EHHM2022a} that if $I(G)^{[k]}$ is linearly related for some $k\geq 1$, then $I(G)^{[k+1]}$ is linearly related as well. We extend \cite[Theorem~3.1]{EHHM2022a} to  monomial ideals, under some additional assumptions.
	
	\begin{Theorem}[\textbf{A condition for consecutive linearly related powers}]\label{thm:linearly related consecutive powers}
		Let $I$ be a monomial ideal such that $|\supp(w)|=2$ for every $w\in G(I)$. Suppose that $I^{[k]}$ is linearly related for some $1\leq k <\nu(I)$. If $\supp(u)\neq \supp(v)$ for every $u,v\in G(I^{[k+1]})$ with $u\neq v$, then $I^{[k+1]}$ is linearly related.
	\end{Theorem}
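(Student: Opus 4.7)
The plan is to apply Theorem~\ref{connected criterion} to $I^{[k+1]}$. First, Lemma~\ref{lem:generated in single degree} ensures that $I^{[k+1]}$ is generated in a single degree $d(k+1)$, where $d$ is the common degree of every $u\in G(I)$ appearing as a factor in some generator of $I^{[k+1]}$. Hence it suffices to show that for every pair $U,V\in G(I^{[k+1]})$ there is a path in $G^{(U,V)}_{I^{[k+1]}}$ from $U$ to $V$. I fix factorizations $U=u_1\cdots u_{k+1}$ and $V=v_1\cdots v_{k+1}$ with $u_i,v_j\in G(I)$ pairwise support-disjoint within each product, and split into two cases according to whether $U$ and $V$ share a common factor.

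Suppose first that $U$ and $V$ share a common factor, say $u_{k+1}=v_{k+1}=:w$. Then $U/w,\,V/w\in G(I^{[k]})$ and both divide $\lcm(U,V)/w$. Using the assumed linearity of $I^{[k]}$ together with Theorem~\ref{connected criterion}, I extract a path $U/w=W_0,W_1,\dots,W_r=V/w$ in $G^{(U/w,V/w)}_{I^{[k]}}$. A short check shows that $\supp(w)$ is disjoint from $\supp(W_i)$ for every $i$: the support-disjointness within $U$ and within $V$ forces $\supp(w)$ to be disjoint from $\supp(\lcm(U,V)/w)$, and $W_i$ divides the latter. So each $W_iw$ is a product of $k+1$ pairwise support-disjoint elements of $G(I)$, of degree $d(k+1)$, and hence lies in $G(I^{[k+1]})$. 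Multiplying the path by $w$ produces the desired path from $U$ to $V$ in $G^{(U,V)}_{I^{[k+1]}}$.

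Suppose next that $U$ and $V$ share no common factor. The hypothesis forces $\supp(U)\neq\supp(V)$, and because $|\supp(U)|=|\supp(V)|=2(k+1)$, I may fix $a\in\supp(U)\setminus\supp(V)$ (swapping $U$ and $V$ if necessary). Let $u_1$ be the unique factor of $U$ containing $a$, and write $\supp(u_1)=\{a,b\}$. If $b\in\supp(V)$, let $v_1$ be the unique factor of $V$ containing $b$ and set $W:=u_1v_2\cdots v_{k+1}$; otherwise set $W:=u_1v_1\cdots v_k$. In either subcase the bound $|\supp(u_1)|=2$, the support-disjointness within $V$, and the choice $a\notin\supp(V)$ together ensure that $W$ is a product of $k+1$ pairwise support-disjoint elements of $G(I)$, so $W\in G(I^{[k+1]})$; a per-variable exponent comparison gives $W\mid\lcm(U,V)$. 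Since $U$ and $W$ share the factor $u_1$, and $W$ and $V$ share $k$ of the $v_j$, two applications of the first case produce paths from $U$ to $W$ and from $W$ to $V$; concatenating them and noting that $\lcm(U,W)$ and $\lcm(W,V)$ both divide $\lcm(U,V)$ yields the required path in $G^{(U,V)}_{I^{[k+1]}}$.

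The main obstacle I expect is the construction and verification of the bridge $W$ in the second case, in particular showing that $W\in G(I^{[k+1]})$ and $W\mid\lcm(U,V)$. This verification uses each of the three hypotheses of the theorem: support-disjointness within the factorizations of $U$ and $V$; the bound $|\supp(w)|=2$ for every $w\in G(I)$, so that dropping a single $v_j$ to make room for $u_1$ restores disjointness; and the assumption that distinct generators of $I^{[k+1]}$ have distinct supports, which supplies the vertex $a\in\supp(U)\setminus\supp(V)$. Once $W$ is in hand, reducing the no-common-factor case to two applications of the common-factor case is straightforward.
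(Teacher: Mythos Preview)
Your proof is correct and follows essentially the same approach as the paper's: build a bridge monomial $W=u_1\cdot(\text{$k$ of the $v_j$})$ sharing a factor with each of $U$ and $V$, then invoke the linear-relatedness of $I^{[k]}$ twice (once via the shared $u_1$, once via a shared $v_j$) and concatenate. The only difference is organizational: you isolate the ``common factor'' situation as a separate Case~1 and then appeal to it twice in Case~2, whereas the paper carries out both multiplications inline without naming the case split; the paper also avoids your subcase on whether $b\in\supp(V)$ by simply relabelling the $v_j$ so that $b$, if present, lies in $v_1$.
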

	\begin{proof}
		Suppose that $\supp(u)\neq \supp(v)$ for every $u,v\in G(I^{[k+1]})$ with $u\neq v$. By the previous lemma, $I^{[k]}$ is generated in degree $dk$, and $I^{[k+1]}$ is generated in degree $d(k+1)$. Let $u,v \in G(I^{[k+1]})$ with $u\neq v$. By Theorem~\ref{connected criterion} and Lemma~\ref{lem:generated in single degree}, it suffices to find a path in $G_{I^{[k+1]}}^{(u,v)}$ connecting $u$ to $v$. Let $u=u_1\cdots u_{k+1}$ and let $v=v_1\cdots v_{k+1}$ where $u_i,v_i \in G(I)$ for each $i=1,\dots ,k+1$ and
		\[\supp(u_p)\cap \supp(u_q)=\emptyset=\supp(v_p)\cap \supp(v_q)\]
		for every distinct $p,q\in \{1,\dots, k+1\}$. By Lemma~\ref{lem:generated in single degree}, we have that $\deg(u_i)=\deg(v_i)=d$ for every $i=1,\dots ,k+1$. By the initial assumption, we may assume that there exists $\ell \in \supp(u)\setminus \supp(v)$. Without loss of generality, we may assume that $x_\ell$ divides $u_1$. Let $\supp(u_1)=\{\ell, m\}$. By definition of matching power, there exists at most one $j$ such that $x_m$ divides $v_j$. Again, without loss of generality, we may assume that $x_m$ does not divide $v_i$ for $i=2,\dots ,k+1$. Now, we have
		\[\supp(u_1)\cap \supp(v_p)=\emptyset\ \ \text{ for all }\ \ p=2,3,\dots ,k+1.\]
		
		Let $u'=u_2\dots u_{k+1}$ and $v'=v_2\dots v_{k+1}$. Since $u', v'\in G(I^{[k]})$ there exists a path $u'=z_0, z_1, z_2, \dots , z_t, v'=z_{t+1}$ in $G_{I^{[k]}}^{(u',v')}$ connecting $u'$ to $v'$. We claim that
		\[P: u, u_1z_1, u_1z_2, \dots ,u_1z_t, u_1v'\]
		is a path in $G_{I^{[k+1]}}^{(u,u_1v')}$. To prove the claim, we must show that
  \begin{itemize}
      \item[(i)] $u_1z_i\in G(I^{[k+1]})$ for all $i=1,\dots, t+1$,
      \item[(ii)] $u_1z_i$ divides $\lcm(u,u_1v')$ for all $i=1,\dots ,t$ and,
      \item[(iii)] $\deg(\lcm(u_1z_i, u_1z_{i+1}))=d(k+1)+1$ for all $i=0,\dots ,t$.
  \end{itemize}
  Since $\supp(u_1)\cap \supp(\lcm(u', v'))=\emptyset$, the monomial $u_1z_i$ belongs to $I^{[k+1]}$ for all $i=1,\dots ,t+1$. Moreover, since $u_1z_i$ is of degree $d(k+1)$, it follows that $u_1z_i \in G(I^{[k+1]})$, which proves (i). To see (ii) holds, observe that 
  \[\lcm(u,u_1v')=\lcm(u_1z_0, u_1z_{t+1})=u_1\lcm(z_0, z_{t+1}).\]
  Lastly, (iii) holds because for all $i=0,\dots ,t$ we have
  \[\deg(\lcm(u_1z_i, u_1z_{i+1}))=\deg(u_1)+\deg(\lcm(z_i, z_{i+1}))=d+ (dk+1).\]
  
  Now, let $w=u_1v_2\dots v_k$ and $w'=v_1v_2\dots v_k$. Since $w, w'\in G(I^{[k]})$ there exists a path $w, y_1, y_2, \dots , y_s, w'$ in $G_{I^{[k]}}^{(w,w')}$ connecting $w$ to $w'$. As before, we can then form a path $P'$
		\[P': wv_{k+1}, y_1v_{k+1}, y_2v_{k+1}, \dots , y_sv_{k+1}, w'v_{k+1}=v\]
		in $G_{I^{[k+1]}}^{(u_1v', v )}$. Connecting $P$ and $P'$ we get the required path, as $u_1v'=wv_{k+1}$.
	\end{proof}

A matching $M$ of $G$ is called a \textit{perfect matching} of $G$ if $V(M)=V(G)$. We say that a cycle (or a path) is even (respectively odd) if it has an even (respectively odd) number of edges. A connected graph $G$ is called a \textit{cactus graph} if every edge of $G$ belongs to at most one cycle of $G$.

For the rest of this section, we will be interested in graphs which satisfy the property that every subgraph of them has at most one perfect matching. One can characterize the connected components of these graphs as special cactus graphs.

\begin{Lemma}[\textbf{Description of graphs studied in this section}]\label{lem: special cactus}
    Let $G$ be a graph without isolated vertices. Then the following statements are equivalent.
    \begin{itemize}
        \item[\textup{(a)}] Every subgraph of $G$ has at most one perfect matching.
        \item[\textup{(b)}] $G$ has no even cycles.
        \item[\textup{(c)}] Every block of $G$ is an edge or an odd cycle, i.e., every connected component of $G$ is a cactus graph whose blocks are edges or odd cycles.
    \end{itemize}
\end{Lemma}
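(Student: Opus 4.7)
My plan is to establish the equivalence by proving $(a) \Leftrightarrow (b)$ via a symmetric-difference argument on perfect matchings, and $(b) \Leftrightarrow (c)$ via the block decomposition together with the ear decomposition theorem for $2$-connected graphs. The easy direction is $(a) \Rightarrow (b)$: if $C$ is an even cycle of $G$, then $C$ itself is a subgraph of $G$ with two distinct perfect matchings, namely its two alternating edge sets, contradicting $(a)$.

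For $(b) \Rightarrow (a)$, I would take a subgraph $H$ of $G$ with two distinct perfect matchings $M_1 \neq M_2$ and form the symmetric difference $M_1 \triangle M_2$. Every vertex of $H$ has degree $0$ or $2$ in $M_1 \triangle M_2$, so its connected components are disjoint cycles whose edges alternate between $M_1$ and $M_2$; any such alternating cycle is even, and at least one exists since $M_1 \neq M_2$, producing an even cycle in $G$ and contradicting $(b)$.

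The direction $(c) \Rightarrow (b)$ uses the standard fact that any cycle of a graph lies entirely in a single block, because cycles are $2$-connected; if every block of $G$ is an edge or an odd cycle, then the only cycles of $G$ are those odd cycles. The crux is $(b) \Rightarrow (c)$. Let $B$ be a block of $G$; if $B$ is a bridge we are done, so assume $B$ is $2$-connected on at least three vertices. By Whitney's ear decomposition theorem, $B$ is obtained from some cycle $C_0 \subseteq B$ by successively attaching ears, i.e.\ paths of length $\geq 1$ with distinct endpoints in the current subgraph and interior vertices outside it. If $B = C_0$, then $(b)$ forces $C_0$ to be odd and we are done. Otherwise, I would consider the first ear $P$ of length $r \geq 1$ attached at distinct vertices $u, v \in V(C_0)$: writing the two $u$-$v$ arcs of $C_0$ as having lengths $p$ and $q$ with $p + q = |C_0|$ odd, so that $p$ and $q$ have opposite parities, the subgraph $C_0 \cup P$ contains new cycles of lengths $p + r$ and $q + r$, exactly one of which is even — contradicting $(b)$. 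Hence no ear is ever attached and $B = C_0$ is odd. The main obstacle is this ear-decomposition parity argument, together with handling the base case where the block is a single edge and verifying that cycles respect blocks; once these are in place, the remaining implications are routine.
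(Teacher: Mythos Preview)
Your proof is correct. For $(a)\Leftrightarrow(b)$ and $(c)\Rightarrow(b)$ your arguments coincide with the paper's: the symmetric-difference graph on two perfect matchings is a disjoint union of even cycles, and any cycle of $G$ lies in a single block.

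The difference is in $(b)\Rightarrow(c)$. The paper does not argue this directly but simply cites two exercises in West's textbook \cite{WEST}. You instead give a self-contained proof via Whitney's open ear decomposition: starting from a cycle $C_0$ in a $2$-connected block $B$, the first ear $P$ of length $r$ attached at $u,v\in V(C_0)$ produces two new cycles of lengths $p+r$ and $q+r$, where $p+q=|C_0|$ is odd, so exactly one of them is even. This is a genuine alternative and arguably preferable here, since it avoids an external reference and makes the parity obstruction explicit; the paper's approach is shorter on the page but less transparent. One small point worth making explicit in your write-up: when $r=1$ the ear is a chord, and simplicity of $G$ guarantees $u,v$ are not already adjacent on $C_0$, so both arcs have positive length and the two new cycles are genuine.
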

\begin{proof}
   First, observe that every even cycle has two perfect matchings. On the other hand, suppose that $H$ is a subgraph of $G$ with two perfect matchings $M_1$ and $M_2$. Consider the graph $N$ whose edge set is the symmetric difference $M_1\triangle M_2$ and whose vertex set is $V(M_1\triangle M_2)$. Then every vertex of $N$ belongs to exactly two edges of $N$, one from $M_1\setminus M_2$ and the other from $M_2\setminus M_1$. Therefore, $N$ consists of a disjoint union of even cycles, which shows the equivalence of (a) and (b). It is known that (b) $\implies$ (c), see Exercises 4.1.31 and 4.2.18 in \cite{WEST}. Lastly, (c) $ \implies$ (b) because every even cycle, being a connected graph with no cut vertex, must be contained in a block of $G$.
\end{proof}

We will now observe that the assumption of the previous theorem is satisfied for edge ideals of weighted oriented graphs whose underlying graphs are as in Lemma~\ref{lem: special cactus}. Hereafter, to simplify the notation, we identify each vertex $i\in V(\mathcal{D})$ with the variable $x_i$. Hence, we will often write $x_i$ to denote $i$.
 \begin{Lemma}\label{lem: distinct support}
     Let $\mathcal{D}$ be a weighted oriented graph with underlying graph $G$. Suppose that every subgraph of $G$ has at most one perfect matching. Let $1\leq k\leq \nu(G)$ and $u,v\in G(I(\mathcal{D})^{[k]})$. If $\supp (u)= \supp (v)$, then $u=v$. 
 \end{Lemma}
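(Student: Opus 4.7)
The plan is to reinterpret $u$ and $v$ combinatorially as perfect matchings of a common induced subgraph of $G$, and then invoke the no-even-cycle characterization of Lemma~\ref{lem: special cactus}.

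First, I would expand $u = u_1 \cdots u_k$ with $u_i = x_{a_i} x_{b_i}^{w_{b_i}}$ corresponding to directed edges $(a_i, b_i) \in E(\mathcal{D})$ of pairwise disjoint support, and similarly $v = v_1 \cdots v_k$ with $v_j = x_{c_j} x_{d_j}^{w_{d_j}}$ for $(c_j, d_j) \in E(\mathcal{D})$. The support disjointness guarantees that the unordered edges $\{a_i, b_i\}$ form a matching $M_u$ of $G$ covering exactly the vertices in $\supp(u)$, and analogously $M_v$ covers $\supp(v)$.

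Since $\supp(u) = \supp(v)$, letting $H$ be the subgraph of $G$ induced on this common vertex set, both $M_u$ and $M_v$ become perfect matchings of $H$. The hypothesis that every subgraph of $G$ has at most one perfect matching — equivalently, by Lemma~\ref{lem: special cactus}, that $G$ has no even cycles — applies to $H$ in particular, forcing $M_u = M_v$ as edge sets.

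Finally, after relabeling the $v_j$'s so that $\{a_i, b_i\} = \{c_i, d_i\}$ for each $i$, I would use that each edge of the underlying graph $G$ carries a single orientation (with a determined weight on its head) in $\mathcal{D}$: this promotes the equality of unordered pairs to the equality of ordered pairs $(a_i, b_i) = (c_i, d_i)$, and hence $u_i = v_i$, giving $u = v$. There is no real obstacle here; the content is entirely encoded in Lemma~\ref{lem: special cactus}, and the only subtlety is the bookkeeping step that turns support disjointness of the $u_i$'s into the property that the underlying edges form a bona fide perfect matching of $H$.
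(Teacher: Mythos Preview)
Your proposal is correct and follows essentially the same approach as the paper: both arguments write $u$ and $v$ as products over matchings $M_u$ and $M_v$ of $G$, observe that $\supp(u)=\supp(v)$ makes these perfect matchings of the same induced subgraph, and invoke the hypothesis to get $M_u=M_v$, hence $u=v$. The only difference is cosmetic: you route the uniqueness through the no-even-cycle reformulation of Lemma~\ref{lem: special cactus} and spell out the orientation step at the end, whereas the paper applies the unique-perfect-matching hypothesis directly.
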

 \begin{proof}
 Let $u=x_1y_1^{w(y_1)}\dots x_ky_k^{w(y_k)}$ where $(x_i,y_i)\in E(\mathcal{D})$ for each $i$ and $M_1=\{\{x_i,y_i\} : i=1,\dots ,k\}$ is a matching in $G$. Let $v=z_1t_1^{w(t_1)}\dots z_kt_k^{w(t_k)}$ where $(z_i,t_i)\in E(\mathcal{D})$ for each $i$ and $M_2=\{\{z_i,t_i\} : i=1,\dots ,k\}$ is a matching in $G$. Suppose that $\supp (u) =\supp (v)$. Then we can set $W:=V(M_1)=V(M_2)$. Since the induced subgraph of $G$ on $W$ has at most one perfect matching, it follows that $M_1=M_2$ and therefore $u=v$. 
 \end{proof}\medskip


If $I(\mathcal{D})\ne I(G)$ and $G$ is as in Lemma~\ref{lem: special cactus}, then we will see in Theorem~\ref{thm: linearly related only in matching power} that the only linearly related matching power is the last one. Before we can prove Theorem \ref{thm: linearly related only in matching power}, we need some preliminary lemmas. Hereafter, with abuse of notation, for a monomial $u$, we denote by $\supp(u)$ also the set of variables dividing $u$.

\begin{Lemma}\label{lem: if deg lcm(u,v)=d+1}
    Let $\mathcal{D}$ be a weighted oriented graph and let $1\leq k\leq \nu(I(\mathcal{D}))$.
    \begin{enumerate}
        \item[\textup{(a)}] Suppose that every subgraph of the underlying graph $G$ of $\mathcal{D}$ has at most one perfect matching. Then, $u\in G(I(\mathcal{D}^{[k]}))$ if and only if $u=x_1y_1^{w(y_1)}\dots x_ky_k^{w(y_k)}$ for some $(x_i,y_i)\in E(\mathcal{D})$ with $\{\{x_i,y_i\} : i=1,\dots ,k\}$ a matching in $G$. 
        \item[\textup{(b)}] Let $u, v\in G(I(\mathcal{D})^{[k]})$ such that $\supp(u)\neq \supp(v)$ and \[\deg (\lcm(u,v))=\deg (u)+1=\deg(v)+1.\]  Then there exist variables $z_1\notin \supp(u)$, $z_2\notin\supp(v)$ such that $v=uz_1/z_2$, $\deg_{z_1}(v)=1$ and $\deg_{z_2}(u)=1$.
    \end{enumerate}
\end{Lemma}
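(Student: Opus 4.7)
The plan is to derive part (a) from the definition of matching power together with the unique-matching hypothesis on $G$, and then prove part (b) by a direct exponent-vector analysis that uses only the shape of the generators, not the extra hypothesis on $G$.

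For the forward direction of (a), any minimal generator of $I(\mathcal{D})^{[k]}$ is, by definition, a product of $k$ generators of $I(\mathcal{D})$ with pairwise disjoint supports (a proper divisor of such a product would itself lie in the ideal, violating minimality). Since every generator of $I(\mathcal{D})$ has the form $x_{a}y_{b}^{w(y_b)}$ for some edge $(x_a,y_b)\in E(\mathcal{D})$, disjointness of supports forces the collection of underlying edges to form a matching of $G$. For the backward direction, given a matching $M_u=\{\{x_i,y_i\}\}_{i=1}^{k}$ with each $(x_i,y_i)\in E(\mathcal{D})$, the product $u=\prod_{i=1}^{k} x_iy_i^{w(y_i)}$ clearly lies in $I(\mathcal{D})^{[k]}$. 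To establish minimality, I would assume some $v\in G(I(\mathcal{D})^{[k]})$ divides $u$, apply the forward direction to produce a matching $M_v$ of size $k$ underlying $v$, and observe $V(M_v)\subseteq\supp(u)=V(M_u)$; since both sets have cardinality $2k$, they coincide, so the induced subgraph of $G$ on $V(M_u)$ admits the two perfect matchings $M_u$ and $M_v$. The unique-matching hypothesis gives $M_u=M_v$, and because each underlying edge of $G$ has a single orientation in $\mathcal{D}$, the factorizations of $u$ and $v$ coincide, yielding $v=u$.

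For (b), write $u=\prod x_i^{a_i}$ and $v=\prod x_i^{b_i}$. The hypothesis $\deg(\lcm(u,v))=\deg(u)+1=\deg(v)+1$ translates into the pair of identities $\sum_i\max(b_i-a_i,0)=\sum_i\max(a_i-b_i,0)=1$, so exactly one variable $z_1$ satisfies $b_{z_1}=a_{z_1}+1$ and exactly one $z_2$ satisfies $a_{z_2}=b_{z_2}+1$, giving $v=uz_1/z_2$ at once. The key observation, available from the shape of the generators alone, is that every $w\in G(I(\mathcal{D})^{[k]})$ satisfies $\deg_z(w)\in\{0,1,w(z)\}$ for every variable $z$: either $z$ does not appear in $w$, or it appears in a unique matching edge, contributing $1$ when it is the tail and $w(z)$ when it is the head. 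If $a_{z_1}\ge 1$, then $a_{z_1}\in\{1,w(z_1)\}$, forcing $b_{z_1}=a_{z_1}+1\in\{2,w(z_1)+1\}$, which contradicts $b_{z_1}\in\{0,1,w(z_1)\}$ since $w(z_1)\ge 1$. Hence $a_{z_1}=0$, so $z_1\notin\supp(u)$ and $\deg_{z_1}(v)=1$; the symmetric argument handles $z_2$.

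The step I expect to be trickiest is the minimality half of (a): one must argue that a dividing generator consumes all $2k$ vertices of $\supp(u)$, and the unique-matching hypothesis on $G$ is used precisely at this point to rule out a genuinely different matching on the same vertex set. Part (b), once the three-value constraint on $\deg_z$ is isolated, is a short arithmetic check and requires no hypothesis beyond the definitions themselves.
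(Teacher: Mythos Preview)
Your treatment of part~(a) is correct and essentially reproduces the paper's argument: the paper defers the backward direction to Lemma~\ref{lem: distinct support}, whose proof is exactly your ``two perfect matchings on the same vertex set'' reasoning.

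Part~(b), however, has a genuine gap. Your three-value constraint $\deg_z(w)\in\{0,1,w(z)\}$ is correct, but the claimed contradiction fails when $w(z_1)=2$: if $a_{z_1}=1$ then $b_{z_1}=2=w(z_1)$, which \emph{does} lie in $\{0,1,w(z_1)\}$. So the case $\deg_{z_1}(u)=1$, $\deg_{z_1}(v)=2$, $w(z_1)=2$ survives your arithmetic check. This is not a phantom: take the oriented $4$-cycle with $E(\mathcal{D})=\{(1,2),(3,2),(4,3),(4,1)\}$ and weights $w(1)=w(3)=2$, $w(2)=w(4)=1$. The two $2$-matchings give $u=x_1x_2x_3^2x_4$ and $v=x_1^2x_2x_3x_4$, both minimal generators of $I(\mathcal{D})^{[2]}$, with $\deg(\lcm(u,v))=6=\deg(u)+1$; yet $z_1=x_1\in\supp(u)$. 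Your argument never invokes the hypothesis $\supp(u)\ne\supp(v)$, and this example shows that hypothesis is essential.

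The paper's route is simpler and avoids the issue entirely. It first uses that every minimal generator of $I(\mathcal{D})^{[k]}$ has support of size exactly $2k$; combined with $\supp(u)\ne\supp(v)$, this forces the existence of some variable in $\supp(v)\setminus\supp(u)$. Any such variable contributes at least $1$ to $\deg(\lcm(u,v))-\deg(u)$, so it must be the unique $z_1$ and must satisfy $\deg_{z_1}(v)=1$; symmetrically for $z_2$. No three-value analysis is needed. Your argument is easily repaired along these lines: once you have isolated $z_1$ as the sole index with $b_{z_1}>a_{z_1}$, observe that any variable in $\supp(v)\setminus\supp(u)$ (which must exist by equal support sizes and $\supp(u)\ne\supp(v)$) has $b>a=0$ and hence equals $z_1$.
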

\begin{proof}
(a) The ``only if" side of the statement is by definition of matching power. The ``if" side follows from Lemma~\ref{lem: distinct support} and the fact that every minimal monomial generator of $I(\mathcal{D})^{[k]}$ has a support of size $2k$.
 
    (b) Since both $u$ and $v$ have support of size $2k$ and $\supp(u)\neq \supp(v)$, there exists a variable $z_1\in \supp(v)\setminus \supp(u)$ and $z_2\in \supp(u)\setminus \supp(v)$. Since $\deg (\lcm(u,v))=\deg (u)+1$, we get $\supp(v)\setminus \supp(u)=\{z_1\}$ and $\deg_{z_1}(v)=1$. Similarly, since $\deg (\lcm(u,v))=\deg (v)+1$, we get $\supp(u)\setminus \supp(v)=\{z_2\}$ and $\deg_{z_2}(u)=1$. Then for every $t\in \supp(u)\cap \supp(v)$, we get $\deg_t(u)=\deg_t(v)$ and the result follows.
\end{proof}
\begin{Lemma}\label{lem:degree of variable bigger than 1}
    Let $\mathcal{D}$ be a weighted oriented graph with underlying graph $G$. Suppose that every subgraph of $G$ has at most one perfect matching. Suppose that $I(\mathcal{D})^{[k]}$ is linearly related. Let $u\in G(I(\mathcal{D})^{[k]})$ and let $x$ be a variable such that $\deg_{x}(u)=r>1$. Then $\deg_{x}(v)=r$ for every $v\in G(I(\mathcal{D})^{[k]})$.  
\end{Lemma}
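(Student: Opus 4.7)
The plan is to combine the path criterion for linearly related ideals (Theorem~\ref{connected criterion}) with the rigid one-step description provided by Lemma~\ref{lem: if deg lcm(u,v)=d+1}(b). The point is that a single step along such a path exchanges only variables of multiplicity one, so any variable appearing to higher power is frozen along the whole walk from $u$ to any other generator.

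First I would fix an arbitrary $v\in G(I(\mathcal{D})^{[k]})$; if $\supp(v)=\supp(u)$ then $v=u$ by Lemma~\ref{lem: distinct support} and the conclusion is immediate. Otherwise, since $I(\mathcal{D})^{[k]}$ is generated in a single degree (this is built into the definition of \emph{linearly related}), Lemma~\ref{lem:generated in single degree} supplies a common degree $d$ for its generators, and Theorem~\ref{connected criterion} yields a path
\[
u=w_0,\;w_1,\;w_2,\;\dots,\;w_t=v
\]
in $G^{(u,v)}_{I(\mathcal{D})^{[k]}}$, where each $w_i\in G(I(\mathcal{D})^{[k]})$ has degree $d$, divides $\lcm(u,v)$, and $\deg\lcm(w_i,w_{i+1})=d+1$. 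Because consecutive vertices along a path are distinct, Lemma~\ref{lem: distinct support} forces $\supp(w_i)\neq\supp(w_{i+1})$ for each $i$, and hence the hypothesis of Lemma~\ref{lem: if deg lcm(u,v)=d+1}(b) is satisfied on every edge of the path.

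Concretely, this gives, for each $i$, variables $z_{i,1}\notin\supp(w_i)$ and $z_{i,2}\notin\supp(w_{i+1})$ such that $w_{i+1}=w_i\cdot z_{i,1}/z_{i,2}$, with $\deg_{z_{i,1}}(w_{i+1})=\deg_{z_{i,2}}(w_i)=1$ and $\deg_y(w_i)=\deg_y(w_{i+1})$ for every variable $y\notin\{z_{i,1},z_{i,2}\}$. Now I would track $\deg_x$ along the path: from $\deg_x(w_0)=r>1$ the variable $x$ cannot coincide with $z_{0,2}$ (which has degree one in $w_0$), nor with $z_{0,1}$ (which is absent from $\supp(w_0)$), so $\deg_x(w_1)=\deg_x(w_0)=r$. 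Iterating the same dichotomy along the path yields $\deg_x(v)=\deg_x(w_t)=r$. The only delicate point, which I view as the main obstacle, is justifying that consecutive generators along the path have distinct supports so that Lemma~\ref{lem: if deg lcm(u,v)=d+1}(b) can actually be invoked; this is exactly what the structural assumption on $G$ buys us via Lemma~\ref{lem: distinct support}, and without it the argument would break since rewrites could shuffle exponents on a fixed support.
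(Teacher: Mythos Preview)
Your proof is correct and follows essentially the same route as the paper's: obtain a path via Theorem~\ref{connected criterion}, use Lemma~\ref{lem: distinct support} to ensure consecutive generators have distinct supports, then apply Lemma~\ref{lem: if deg lcm(u,v)=d+1}(b) step by step to propagate $\deg_x=r$ along the path. The only superfluous detail is the invocation of Lemma~\ref{lem:generated in single degree}, which is not needed here since equigeneration is already part of the definition of \emph{linearly related}.
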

\begin{proof}
    Let $u\neq v$. By Theorem~\ref{connected criterion} there is a path $u_0=u, u_1, u_2, \dots ,u_s=v$ in the graph $H:=G^{(u,v)}_{I(\mathcal{D})^{[k]}}$. Since $\{u_0,u_1\}\in E(H)$, by Lemma~\ref{lem: distinct support} and Lemma~\ref{lem: if deg lcm(u,v)=d+1}(b) it follows that $\deg_{x}(u_1)=r$. Similarly, since $\{u_1,u_2\}\in E(H)$ it follows that $\deg_{x}(u_2)=r$. Continuing this way, we obtain $\deg_{x}(u_s)=r$.
\end{proof}

\begin{Theorem}[\textbf{The only linearly related power is the highest one}]\label{thm: linearly related only in matching power}
	Let $G$ be the underlying graph of $\mathcal{D}$. Suppose that every subgraph of $G$ has at most one perfect matching, and that $I(\mathcal{D})\neq I(G)$. Let $1\leq k\leq \nu(G)$. If $I(\mathcal{D})^{[k]}$ is linearly related, then $k=\nu(G)$.
\end{Theorem}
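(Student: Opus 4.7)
The plan is to argue by contradiction: assume $k<\nu(G)$, use Lemma~\ref{lem:degree of variable bigger than 1} to force every minimal generator of $I(\mathcal{D})^{[k]}$ to be divisible by a fixed high power of some variable $v$, and then exhibit a generator in which $v$ does not appear at all.

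I would start by locating a vertex $v$ with $w(v)\geq 2$, which exists because $I(\mathcal{D})\neq I(G)$. By Remark~\ref{remark: assumption on sources} the vertex $v$ is not a source, so some edge $(u,v)\in E(\mathcal{D})$ is oriented into $v$. The next step is to extend $\{u,v\}$ to a matching $M$ of size $k$ in $G$; since removing two vertices from $G$ can drop the matching number by at most two, one has
\[
\nu(G-\{u,v\})\ \geq\ \nu(G)-2\ \geq\ (k+1)-2\ =\ k-1,
\]
so such a matching $M$ exists. By Lemma~\ref{lem: if deg lcm(u,v)=d+1}(a) the associated monomial $u_M$ is a minimal generator of $I(\mathcal{D})^{[k]}$, and it satisfies $\deg_v(u_M)=w(v)>1$.

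Lemma~\ref{lem:degree of variable bigger than 1} then forces $\deg_v(u')=w(v)>0$ for every $u'\in G(I(\mathcal{D})^{[k]})$; equivalently, every matching of size $k$ in $G$ must meet $v$. To reach the contradiction I would take a maximum matching $M'$ of $G$ (of size $\geq k+1$), remove the at most one edge of $M'$ incident to $v$, and select a $k$-subset: this produces a matching of size $k$ avoiding $v$, whose associated generator has $\deg_v=0$, contradicting what was just established. The hypothesis $k<\nu(G)$ plays its essential role only in the extension step producing $M$; the rest follows mechanically from the preceding lemmas, so I do not anticipate any serious technical obstacle beyond the two matching-theoretic bounds just used.
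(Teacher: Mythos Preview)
Your argument is correct. Both the paper's proof and yours pivot on Lemma~\ref{lem:degree of variable bigger than 1}: once some generator has $x_v$-degree $r>1$, every generator must, and this is contradicted by a $k$-matching avoiding $v$. The difference is purely tactical. The paper begins with an arbitrary $(k+1)$-matching $\{(a_i,b_i)\}$, first shows that all $w(b_i)$ coincide, and then needs an auxiliary comparison with a high-weight edge $(c,d)$ to conclude that this common weight exceeds~$1$; only after this ``$r=q$'' step do the two sub-$k$-matchings $u_1\cdots u_k$ and $u_2\cdots u_{k+1}$ yield the contradiction. You instead locate the high-weight vertex $v$ at the outset, invoke the elementary bound $\nu(G-\{u,v\})\ge\nu(G)-2$ to embed $\{u,v\}$ in a $k$-matching, and separately peel one edge off a maximum matching to obtain a $k$-matching missing $v$. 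Your route is slightly shorter because it bypasses the $r=q$ argument, while the paper's route has the minor aesthetic advantage that both contradictory generators are read off a single $(k+1)$-matching without any external matching-extension inequality.
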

\begin{proof}
	Assume for a contradiction that $I(\mathcal{D})^{[k]}$ is linearly related but $k<\nu(G)$. Let $M=\{\{a_i,b_i\} : i=1,\dots ,k+1\}$ be a matching with $(a_i,b_i)\in E(\mathcal{D})$. We claim that all the $b_i$s have the same weight. To see this, we let $u_i=a_ib_i^{w(b_i)}$ for each $i$ and $z=u_1\dots u_{k+1}$. Then Lemma~\ref{lem: if deg lcm(u,v)=d+1}(a) implies that $z/u_i \in G(I(\mathcal{D})^{[k]})$ for each $i=1,\dots,k+1$. Since $I(\mathcal{D})^{[k]}$ is generated in single degree, it follows that there is a positive integer $q$ such that $w(b_i)=q$ for all $i=1,\dots ,k+1$.

Since $I(\mathcal{D})\neq I(G)$ there is an edge $(c,d)\in E(\mathcal{D})$ with $w(d)=r>1$. We will show that $r=q$. Without loss of generality, we may assume that \[\{c,d\}\cap V(M)\subseteq \{a_1,b_1, a_2,b_2\}.\]
 
 Then $\{\{c,d\}, \{a_3,b_3\}, \dots ,\{a_{k+1},b_{k+1}\}\}$ is a matching. On the other hand, by Lemma~\ref{lem: if deg lcm(u,v)=d+1}(a) both $cd^ru_3\dots u_{k+1}$ and $u_2\dots u_{k+1}$ are minimal generators of $I(\mathcal{\mathcal{D}})^{[k]}$. Since $I(\mathcal{D})^{[k]}$ is generated in single degree, it follows that $r=q>1$.
	
	Let $w_1=u_1\dots u_k$ and $w_2=u_2\dots u_{k+1}$. Using Lemma~\ref{lem:degree of variable bigger than 1} and comparing the $b_1$-degrees of $w_1$ and $w_2$ we obtain a contradiction.
\end{proof}
The next example shows that we can not drop the hypothesis that every subgraph of $G$ has at most one perfect matching.
\begin{Example}
	\rm Let $\mathcal{D}$ be the oriented graph on vertex set $[6]$, with weights $w(1)=2$ and $w(i)=1$ for $i\neq 1$, and with edge set $$E(\mathcal{D})=\{(2,1),(1,3),(1,4),(1,5),(1,6)\}\cup\{(i,j):2\le i<j\le 6\}.$$
	Then, $G$ has several perfect matchings, and
	$$
	I(\mathcal{D})=(x_1^2x_2,x_1x_3,x_1x_4,x_1x_5,x_1x_6,x_2x_3,x_2x_4,x_2x_5,x_2x_6,\dots,x_4x_5,x_4x_6,x_5x_6).
	$$
	We have $\nu(I(\mathcal{D}))=3$. However $I(\mathcal{D})^{[2]}=I(G)^{[2]}$ and $I(\mathcal{D})^{[3]}=I(G)^{[3]}$ are linearly related, indeed they even have a linear resolution.
\end{Example}

We can now characterize when $I(\mathcal{D})^{[k]}$ has a linear resolution or is linearly related provided that the underlying graph $G$ is as in Lemma~\ref{lem: special cactus}.


\begin{Theorem}[\textbf{Characterization of linear resolutions}]\label{Thm:I(D)linRel}
    Let $G$ be the underlying graph of $\mathcal{D}$. Suppose that every subgraph of $G$ has at most one perfect matching. Suppose that $I(\mathcal{D})\neq I(G)$ and $1\leq k \leq \nu(G)$. Then the following statements are equivalent.
    \begin{enumerate}
        \item[\textup{(a)}] $I(\mathcal{D})^{[k]}$ is linearly related.
        \item[\textup{(b)}] $I(\mathcal{D})^{[k]}$ is polymatroidal.
        \item[\textup{(c)}] $I(\mathcal{D})^{[k]}$ has a linear resolution.
    \end{enumerate}
\end{Theorem}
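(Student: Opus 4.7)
The implications (b) $\Rightarrow$ (c) $\Rightarrow$ (a) are standard---polymatroidal ideals have linear resolutions (noted before Theorem \ref{Thm:I^[nu(I)]Polymatroidal}), and having a linear resolution trivially implies being linearly related---so the substantive content lies in (a) $\Rightarrow$ (b). Assuming $I(\mathcal{D})^{[k]}$ is linearly related, Theorem \ref{thm: linearly related only in matching power} forces $k = \nu(G)$, and the plan is to verify the polymatroidal exchange property for $I(\mathcal{D})^{[\nu(G)]}$ directly.

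Fix $u, v \in G(I(\mathcal{D})^{[\nu(G)]})$ and a variable $x_\ell$ with $\deg_{x_\ell}(u) > \deg_{x_\ell}(v)$. Lemma \ref{lem:degree of variable bigger than 1} immediately forces $\deg_{x_\ell}(u) = 1$ and $\deg_{x_\ell}(v) = 0$, since any exponent bigger than $1$ would propagate to every generator. Let $M_u, M_v$ be the maximum matchings of $G$ encoded by $u$ and $v$. The matroidal structure of $I(G)^{[\nu(G)]}$ via Edmonds--Fulkerson then supplies $x_j \in V(M_v) \setminus V(M_u)$ such that $(V(M_u) \setminus \{x_\ell\}) \cup \{x_j\}$ is the vertex set of a maximum matching $M'$, which is unique by Lemma \ref{lem: special cactus}; let $u'$ be the associated generator of $I(\mathcal{D})^{[\nu(G)]}$ (well defined by Lemma \ref{lem: distinct support}). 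Since $\deg_{x_j}(u) = 0 < \deg_{x_j}(v)$, proving the identity $u' = x_j u / x_\ell$ will close the exchange.

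To identify $u'$ with $x_j u / x_\ell$, I would argue degree by degree along $M_u \triangle M'$, which is a single alternating path $P$ from $x_\ell$ to $x_j$ (no even-cycle components are possible by Lemma \ref{lem: special cactus}). For each interior vertex $z$ of $P$, both $u$ and $u'$ carry $z$ with exponent in $\{1, w(z)\}$, and Lemma \ref{lem:degree of variable bigger than 1} rules out any mixed case $\deg_z(u) \neq \deg_z(u')$, so these exponents agree. At the endpoint $x_j$, applying the same lemma contrapositively to $\deg_{x_j}(u) = 0$ forces $\deg_{x_j}(u') \le 1$, hence $= 1$ since $x_j$ is $M'$-saturated. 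Off $V(P)$ the matchings coincide edgewise, so the contributions to $u$ and $u'$ agree. Altogether $u'$ and $x_j u / x_\ell$ share the same multidegree and therefore coincide. The main obstacle is precisely this identification: generators of $I(\mathcal{D})^{[\nu(G)]}$ encode not only a matching but also the fixed head/tail orientations of $\mathcal{D}$, which can introduce $w(z)$-exponents, and the linearly related hypothesis---channelled through the rigidity of Lemma \ref{lem:degree of variable bigger than 1}---is what forces these exponents to line up along $P$ so that the combinatorial matroid swap lifts cleanly to a monomial swap.
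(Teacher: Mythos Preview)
Your argument is correct and follows essentially the same route as the paper's: reduce to $k=\nu(G)$ via Theorem~\ref{thm: linearly related only in matching power}, then verify the exchange property by swapping along an alternating path and invoking Lemma~\ref{lem:degree of variable bigger than 1} to force the endpoint exponents to be $1$ and to match the exponents at interior vertices. The only cosmetic difference is that you locate the exchange vertex $x_j$ and the auxiliary matching $M'$ by appealing to Edmonds--Fulkerson on the squarefree level and then analyse $M_u\triangle M'$, whereas the paper works directly with the symmetric difference $M_u\triangle M_v$ and reads off the exchange vertex as the other endpoint of the even path component through $x_\ell$.
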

\begin{proof}
A polymatroidal ideal has linear quotients \cite[Theorem~12.6.2]{HHBook2011} and therefore it has a linear resolution \cite[Proposition~8.2.1]{HHBook2011}.
    We will only show that (a) $\implies$ (b) because (b) $\implies$ (c) $\implies$ (a) is known.

    Suppose that $I(\mathcal{D})^{[k]}$ is linearly related. Then $k=\nu(G)$ by Theorem~\ref{thm: linearly related only in matching power}. Let $u,v\in G(I(\mathcal{D})^{[k]})$. Let $M=\{e_1,\dots,e_k\}$ and $N=\{f_1,\dots,f_k\}$ be the underlying matchings (of undirected edges) for respectively $u$ and $v$.  Let $M_{e_i}$ and $N_{f_i}$ be the monomial factors of $u$ and $v$ (respectively) corresponding to $e_i$ and $f_i$. That is, $u=M_{e_1}\dots M_{e_k}$ and $v=N_{f_1}\dots N_{f_k}$ with $\supp(M_{e_i})=e_i$ and $\supp(N_{f_i})=f_i$.

    As in the proof of \cite[Theorem 1 on page 246]{W}, let $H$ be the graph with edge set the symmetric difference $M\triangle N$ and vertex set $V(M\triangle N)$. Then every vertex of $H$ belongs to at most two edges of $H$. Moreover, if a vertex is in two edges, then one edge is in $M\setminus N$ and the other edge is in $N\setminus M$. Therefore connected components of $H$ are paths and even cycles. Moreover, since $k$ is the matching number of $G$, it follows that no connected component of $H$ is an odd path. Indeed, if $H$ has an odd path with consecutive edges $e_1,\dots,e_{2p+1}$, we can assume that $e_1,e_3,\dots,e_{2p+1}\in M\setminus N$ and $e_2,e_4,\dots,e_{2p}\in N\setminus M$. But then $(N\setminus\{e_2,e_4,\dots,e_{2p}\})\cup\{e_1,e_3,\dots,e_{2p+1}\}$ is a $(k+1)$-matching.

    We verify the exchange property for $u$ and $v$ and this will imply statement (b). Let $z$ be a variable such that $\deg_zu>\deg_zv$. Then, Lemma~\ref{lem:degree of variable bigger than 1} implies that $z$ does not divide $v$. Then $z$ is the endpoint of an even path component of $H$. Let $y$ be the other endpoint of the path. Then $y\notin V(M)$. Since $y$ does not divide $u$, it follows from Lemma~\ref{lem:degree of variable bigger than 1} that $\deg_zu=\deg_yv=1$. Moreover, Lemma~\ref{lem:degree of variable bigger than 1} implies that $\deg_xu=\deg_xv$ for any vertex $x$ in the path. Without loss of generality, suppose that $e_1,f_1,\dots ,e_q,f_q$ are the consecutive edges of the path. Then exchange property is satisfied because $N_{f_1}\dots N_{f_q}u/M_{e_1}\dots M_{e_q}=yu/z \in G(I(\mathcal{D})^{[k]})$.
    \end{proof}
    
In \cite{EFnote} we classify all forests that satisfy the equivalent statements in the above theorem.

\begin{Example}
    \rm Let $\mathcal{D}$ be a weighted oriented graph whose underlying graph $G$ is an odd cycle, with vertex set $V(G)=[2k+1]$ and edge set $$E(G)=\{\{1,2\},\{2,3\},\dots,\{2k,2k+1\},\{2k+1,1\}\}.$$
    It is well--known that $\nu(G)=k$. If $I(\mathcal{D})=I(G)$, then by Theorem \ref{Thm:I^[nu(I)]Polymatroidal}, $I(\mathcal{D})^{[k]}$ is linearly related. We claim that $I(\mathcal{D})^{[k]}$ is not linearly related if $I(\mathcal{D})\neq I(G)$. Assume for a contradiction $I(\mathcal{D})^{[k]}$ is linearly related but $I(\mathcal{D})\neq I(G)$. Then without loss of generality, we may assume that $(2,1)\in E(\mathcal{D})$ and $w(1)>1$. Then, Lemma~\ref{lem:degree of variable bigger than 1} implies that all generators of $I(\mathcal{D})^{[k]}$ have $x_1$-degree bigger than 1. However, if we consider the $k$-matching $M=\{\{2,3\},\{4,5\},\dots,\{2k,2k+1\}\}$ of undirected edges of $G$, then there is a unique generator $v$ of $I(\mathcal{D})^{[k]}$ whose support is $V(M)$ and so $\deg_{x_1}(v)=0$, which is absurd.
\end{Example}

\begin{Example}
    \rm In Theorem \ref{Thm:I(D)linRel}, the condition that every subgraph of $G$ has at most one perfect matching is crucial. For example, let $\mathcal{D}$ be a weighted oriented graph with $I(\mathcal{D})=(x_1x_2^2, x_2x_3^2, x_2x_4^2, x_3x_1^2, x_3x_4^2, x_4x_1^2)$. Then $I(\mathcal{D})^{[2]}$ has a linear resolution but it is not polymatroidal. On the other hand, we do not know the answer to the following question:
\end{Example}

\begin{Question}
    Let $\mathcal{D}$ be a weighted oriented graph with $I(\mathcal{D})\neq I(G)$ where $G$ is the underlying graph. Suppose that $I(\mathcal{D})^{[k]}$ is linearly related. Then, does $I(\mathcal{D})^{[k]}$ have a linear resolution?
\end{Question}

If $\mathcal{D}$ is a connected weighted oriented graph with $I(\mathcal{D})\neq I(G)$, then the above question has a positive answer for $k=1$ by \cite[Theorem~3.5]{BDS23}.

\end{document}